\theoremstyle{plain}
\newtheorem{thm}{\protect\theoremname}
  \theoremstyle{plain}
  \newtheorem{lem}[thm]{\protect\lemmaname}
  \theoremstyle{remark}
  \theoremstyle{plain}
  \newtheorem{prop}[thm]{\protect\propositionname}
  \theoremstyle{definition}
  \newtheorem{definition}[thm]{\protect\definitionname}
  \newtheorem{example}[thm]{\protect\examplename}
  \theoremstyle{plain}
  \newtheorem{corollary}[thm]{\protect\corollaryname}
\newcounter{EQNR}
\providecommand{\lemmaname}{Lemma}
\providecommand{\theoremname}{Theorem}
\providecommand{\propositionname}{Proposition}
\providecommand{\remarkname}{Remark}
\providecommand{\examplename}{Example}
\providecommand{\definitionname}{Definition}
  \providecommand{\examplename}{Example}
  \providecommand{\lemmaname}{Lemma}
  \providecommand{\propositionname}{Proposition}
  \providecommand{\remarkname}{Remark}
\providecommand{\theoremname}{Theorem}
\providecommand{\corollaryname}{Corollary}
\begin{document}

\title{Constructing heat kernels on infinite graphs \footnote{Keywords: heat kernels, graphs, parametrix. 2020 MSC: 35R02, 35K08, 05C05, 39A12.}}


\author{Jay Jorgenson \footnote{The first-named author acknowledges grant support
from PSC-CUNY Award 65400-00-53, which was jointly funded
by the Professional Staff Congress and The City University of New York.} \and Anders Karlsson
\footnote{The second-named author acknowledges grant support by the Swiss NSF grants 200020-200400, 200021-212864 and the Swedish
Research Council grant 104651320.} \and Lejla Smajlovi\'{c}}
\maketitle

\begin{abstract}\noindent
Let $G$ be an infinite, edge- and vertex-weighted graph with certain reasonable restrictions.
We construct the heat kernel of the associated Laplacian using an adaptation of the parametrix approach due to Minakshisundaram-Pleijel in the setting of Riemannian
geometry. This is partly motivated by the wish to relate the heat kernels of a graph and a subgraph, and the wish to have an explicit expression of the heat kernel related to Gaussian-type estimates for any graph metric bounded from below.
Assuming uniform boundedness of the combinatorial vertex degree, we show that a dilated Gaussian depending on any distance metric on $G$, which is uniformly bounded from below can be taken as a parametrix in our construction.
We discuss several applications of parametrix construction.
For example, assuming that the graph is locally finite, we express the heat kernel $H_G(x,y;t)$ as a Taylor series with the lead term being $a(x,y)t^r$, where $r$ is the combinatorial distance between $x$ and $y$ and $a(x,y)$ depends (explicitly) upon edge and vertex weights.
In the case $G$ is the regular $(q+1)$-tree with $q\geq 1$, our construction reproves different explicit
formulas in papers by Chung-Yau, Cowling-Meda-Setti, and Chinta-Jorgenson-Karlsson.

\end{abstract}

\section{Introduction}
In the seminal article \cite{MP49}, Minakshisundaram and Pleijel established a general method by which
one can construct explicitly the heat kernel for the Laplacian operator associated to a smooth compact Riemannian
manifold.  As stated in \cite{MP49}, their method is a generalization of previous work by Carleman.  In effect,
one begins with an initial approximation for the heat kernel for time approaching zero, called a parametrix $H$,
and then one forms a Neumann series $F(H)$ of convolutions only involving the parametrix $H$.  It is then shown that
$H+F(H)$ equals the heat kernel sought.  So, in essence, the heat kernel is realized as a type of fixed point
theorem since one has some flexibility when choosing the parametrix $H$.

\subsection{Main results}

In this paper we will develop a similar methodology by which one can construct the heat kernel associated to
the graph Laplacian for a rather general infinite graph $X$.  Our approach is most directly
inspired by \cite{Mi49} and \cite{Mi53}
as well as \cite{Ro83} and \cite{Ch84}.  The results of this paper extend results from
\cite{CJKS23} in which a parametrix construction is carried out for finite graphs whose vertex weight
function is identically equal to one, compare also with \cite[section 5]{LNY21}. Note that the setting of infinite graphs with non-trivial vertex weights is more intricate and involve deeper analysis as well as an $L^p$- spaces approach to the construction. Moreover, our results complements those from \cite{Wo09} where the heat kernel on infinite graphs is constructed by exhausting the infinite graph with finite, connected subgraphs.

One motivation for undertaking this study is the following. Some of the most significant applications of heat kernel analysis (see \cite{JoLa01, Gr09}  and references therein) come from relating the heat kernels on two spaces $X$ and $Y$ via a quotient structure $Y\twoheadrightarrow X$, such as Poisson summation, Jacobi theta inversion and other trace formulas. The parametrix construction can naturally be used to instead set up a comparison in a subspace situation $X\hookrightarrow Y$. This is the point of view taken by Lin, Ngai and Yau in  \cite{LNY21}, who wrote that it is useful to express the heat kernel expansions of $X$, a finite graph, in terms of that of $Y$, a complete graph containing $X$. These authors viewed this in analogy with relating the heat kernel on a compact $d$-dimensional manifold with $\mathbb{R}^d$ as done in Riemannian geometry. In our setting, since our graphs $X$ are infinite, this would rather correspond to non-compact manifolds or open domains in $\mathbb{R}^d$. Note that we also allow our space $Y$, via the parametrix chosen, to be more general, for example it could be a graph containing $X$ or a domain that $X$ provides a discretization of. See also \cite{CY99} and \cite{HMW19} where a method of imaging is used to compare heat kernels on a graph and its cover.

The second motivation is to provide an \emph{explicit} expression for the heat kernel in terms of a rather general parametrix. This allows one to choose a parametrix that is suitable for applications. For example, the Dirac delta parametrix will yield to explicit formulas for the heat kernel on regular trees, while the choice of the dilated Gaussian as a parametrix (as in section \ref{sec: metric} below) can be used in deriving Gaussian estimates which, of course, will depend upon the chosen metric (adopted to the setting).

Let $G$ be an infinite connected graph with vertex weights $\theta(x)$ and edge weights $w_{xy}$ satisfying conditions (G1) and (G2) given below. The Laplace operator on functions is
\begin{equation*}
\Delta_{G}f(x)=\frac{1}{\theta(x)}\sum_{y\in VG}(f(x)-f(y))w_{xy}.
\end{equation*}
See section \ref{ref our setup} for all details. The main result of this paper is an explicit construction of the heat kernel on $G$ associated to this Laplacian, starting with a parametrix,
see Definition \ref{def. parametrix} for the definition:

\begin{thm}
\label{thm: formula for heat kernel} Let $H$ be a parametrix of
order $k\geq0$ for the heat operator $L_G=\Delta_{G}+\partial_t$.  For $x,y\in VG$ and $t\in \mathbb{R}_{\geq 0}$, let
\begin{equation*}
F(x,y;t):=\sum_{\ell=1}^{\infty}(-1)^{\ell}
(L_{G,x}H)^{\ast\ell}(x,y;t).
\end{equation*}
This series
converges absolutely and uniformly on every compact subset of $VG\times VG\times\mathbb{R}_{\geq 0}$.
The heat kernel $H_{G}$ 
is given by
\begin{equation*} 
H_{G}(x,y;t)=H(x,y;t)+(H\ast F)(x,y;t)
\end{equation*}
and
$$
(H\ast F)(x,y;t)
= O(t^{k+1})
\,\,\,\,\,
\text{\rm as $t \rightarrow 0$.}
$$
\end{thm}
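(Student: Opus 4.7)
The plan is to imitate the classical Minakshisundaram--Pleijel construction while paying close attention to the fact that convolutions over the vertex set are infinite sums, so that the absolute convergence of both the Neumann series and its application of $L_G$ must be carefully tracked. Set $K(x,y;t) := (L_{G,x}H)(x,y;t)$. By the definition of a parametrix of order $k$ (Definition~\ref{def. parametrix}) I expect pointwise bounds of the shape $|K(x,y;t)|\leq C(x,y)\,t^{k}$ for $t$ small, together with the $L^{p}$-type summability in the $y$-variable alluded to in the introduction; these are exactly what is needed to make a single convolution $K\ast K$ meaningful and to produce an extra factor of $t$ each time through the Beta-function identity $\int_{0}^{t}s^{a}(t-s)^{b}\,ds=\mathrm{B}(a+1,b+1)\,t^{a+b+1}$.

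\medskip

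Next I would establish by induction the estimate
\[
\bigl|K^{\ast\ell}(x,y;t)\bigr|\leq M(x,y)\,\frac{C^{\ell}\,t^{k\ell+\ell-1}}{(\ell-1)!}
\]
on any prescribed compact subset of $VG\times VG\times\mathbb{R}_{\geq 0}$, where the factorials come from iterated time integrals and the constants $C,M$ are controlled using property (G1)--(G2) and the parametrix assumptions. Summing in $\ell$ gives absolute and uniform convergence of $F$ on compacta and also yields the bound $|F(x,y;t)|\leq C'\,t^{k}$ for small $t$. Convolving once more with $H$ produces the final estimate $(H\ast F)(x,y;t)=O(t^{k+1})$ as $t\to 0^{+}$, settling the last claim of the theorem.

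\medskip

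The heart of the identification then rests on the formal identity
\[
L_{G,x}(H\ast F)(x,y;t)=F(x,y;t)+\bigl(L_{G,x}H\ast F\bigr)(x,y;t),
\]
which comes from differentiating the $t$-convolution via Leibniz and using the initial condition $H(x,\cdot;0)=\delta/\theta$ built into the parametrix definition; the space derivative $\Delta_{G,x}$ commutes with the convolution by linearity once termwise exchange of the sum over $VG$ is justified by the bounds above. Combined with the telescoping relation $F=-K-K\ast F$ that is immediate from the definition of the Neumann series, this yields
\[
L_{G,x}\bigl(H+H\ast F\bigr)=K+F+K\ast F=0,
\]
so $H_{G}:=H+H\ast F$ solves the heat equation for $t>0$. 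Since $(H\ast F)(x,y;0)=0$ (the integral collapses), $H_{G}$ inherits the correct delta initial data from $H$, and a standard uniqueness/minimality argument for heat kernels on infinite weighted graphs (the $L^{p}$ setup referenced in the introduction and the methods of \cite{Wo09}) identifies $H_{G}$ with the heat kernel.

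\medskip

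The main obstacle I foresee is not the formal cancellation, which is algebraic, but rather the analytic justification of the termwise operations on an \emph{infinite} graph: establishing the iterated bounds uniformly, exchanging the infinite spatial sum with $\partial_{t}$ and with $\Delta_{G,x}$, and controlling the tails of $\sum_{z}H(x,z;t-s)F(z,y;s)\theta(z)$. This is precisely the place where one must exploit the uniform control provided by (G1), (G2) and the parametrix hypothesis in an $L^{p}$ framework rather than relying on finiteness of $VG$.
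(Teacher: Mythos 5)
Your plan is essentially the paper's own argument: prove convergence of the Neumann series by iterated convolution estimates using the Beta-function identity (the paper's Lemmas \ref{lem:convolution bounds} and \ref{lem:convergence of the series}), use the Leibniz-rule/initial-condition identity $L_{G,x}(H\ast f)=f+(L_{G,x}H\ast f)$ (Lemma \ref{lem: L of convol}), telescope against $F=-K-K\ast F$, and conclude via the uniqueness of the bounded solution with delta initial data, with the interchanges of $\sum_{z}$, $\partial_t$ and $\Delta_{G,x}$ justified by the uniform bounds exactly as in the paper. The only slip is your inductive exponent $t^{k\ell+\ell-1}$, which does not follow from Definition \ref{def. parametrix} (only the pointwise bound $|L_{G,x}H|\leq Ct^{k}$ is assumed, not an $O(t^{k})$ decay of the $L^{1}(\theta)$-norm, so one gets the paper's bound $C\|L_{G,x}H\|_{1,\theta}^{\ell-1}t^{k+\ell-1}/(k+\ell-1)!$ instead); this weaker bound still yields $F=O(t^{k})$ and $(H\ast F)=O(t^{k+1})$, so the argument stands.
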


Here, $\ast$ denotes the convolution of two functions with a domain $VG\times VG\times \mathbb{R}_{\geq 0}$, see Definition \ref{def. conv}. Our construction allows for a variety of expressions for the heat kernel, and we focus on two types of parametrix, the simple Dirac delta function (section \ref{sec. Dirac delta param}) and more elaborate parametrix coming from distance functions  (section \ref{sec. further ex}).

First, we show in section \ref{sec. Dirac delta param} that the Dirac delta function $\frac{1}{\theta(x)}\delta_{x=y}$ on $L^1(\theta)$ can be chosen as a parametrix,
under additional assumption that the graph is locally finite, meaning that for each $x\in VG$, the number of vertices adjacent to $x$ is finite.
Then, in Proposition \ref{prop. dirac delta HK} below we show that by taking the Dirac delta function as the parametrix in our main theorem, the heat kernel $H_{G}(x,y;t)$ on $G$ can be expressed as
\begin{equation}\label{eq. HK from dirac}
H_{G}(x,y;t)=\frac{1}{\theta(x)}\delta_{x=y}+ \sum_{\ell=1}^{\infty}(-1)^{\ell}\frac{t^{\ell}}{\ell !}\sum_{z_1,\ldots,z_{\ell-1} \in VG} \delta_x(z_1)\delta_{z_1}(z_2)\cdot \ldots \cdot \delta_{z_{\ell-1}}(y),
\end{equation}
where
\begin{equation}\label{eq:delta_definition}
\delta_x(y):=\frac{1}{\theta(x)}\left\{
                                                             \begin{array}{ll}
                                                               \mu(x), & x=y; \\
                                                               -w_{xy}, & x\sim y \\
                                                               0, & \text{otherwise,}
                                                             \end{array}
                                                           \right. \quad x,y \in VG.
\end{equation}

The combinatorial expression \eqref{eq. HK from dirac} for the heat kernel further highlights the local nature of the heat diffusion.  Specifically, the first $r+1$ terms in the expansion on the right-hand side of \eqref{eq. HK from dirac} in powers of $t$ depend only on points that are at combinatorial distance at most $r$ from the starting point $x$.
Moreover, if the combinatorial distance $d(x,y)$ between $x$ and $y$ is $r\geq 2$, then $\delta_x(z_1)\delta_{z_1}(z_2)\cdot \ldots \cdot \delta_{z_{\ell-1}}(y)=0$ for any choice $z_1,\ldots,z_{\ell-1} \in VG$ of $\ell-1\leq r-1$ points which yields the following corollary.

\begin{corollary} \label{cor. amall time asy}
Let $G$ be an infinite weighted, connected and locally finite graph satisfying conditions (G1) and (G2). Then, for all $x,y\in VG$ with the combinatorial distance $d(x,y)=r\geq 1$ and all $t>0$ we have
$$
\left|H_{G}(x,y;t)-(-1)^r\frac{t^r}{r!}c_r(x,y)\right|\leq t^{r+1} C(x,y;t),
$$
where $c_r(x,y)=\sum_{z_1,\ldots,z_{r-1} \in VG} \delta_x(z_1)\delta_{z_1}(z_2)\cdot \ldots \cdot \delta_{z_{r-1}}(y)$ and
$$
C(x,y;t)=\left| \sum_{\ell=r+1}^{\infty}(-1)^{\ell}\frac{t^{\ell - r-1}}{\ell !}\sum_{z_1,\ldots,z_{\ell-1} \in VG} \delta_x(z_1)\delta_{z_1}(z_2)\cdot \ldots \cdot \delta_{z_{\ell-1}}(y) \right|
$$
is bounded as $t\downarrow 0$.
\end{corollary}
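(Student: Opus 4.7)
The plan is to apply the explicit expansion \eqref{eq. HK from dirac} of $H_G(x,y;t)$ obtained from the Dirac delta parametrix, identify the leading term in $t$, and bound the remainder by $t^{r+1}$ times an expression that is bounded as $t\downarrow 0$.

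Since $d(x,y) = r \geq 1$ we have $x \neq y$, so the leading term $\frac{1}{\theta(x)}\delta_{x=y}$ in \eqref{eq. HK from dirac} vanishes. Writing
$$
S_\ell(x,y) := \sum_{z_1,\ldots,z_{\ell-1}\in VG} \delta_x(z_1)\delta_{z_1}(z_2)\cdots\delta_{z_{\ell-1}}(y),
$$
the first key step is to show that $S_\ell(x,y) = 0$ for every $\ell \leq r-1$. By \eqref{eq:delta_definition} each factor $\delta_u(v)$ vanishes unless $u = v$ or $u \sim v$, so any non-zero summand in $S_\ell$ corresponds to a sequence $x = z_0, z_1, \ldots, z_{\ell-1}, z_\ell = y$ in which every consecutive pair is either equal or adjacent. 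Collapsing the self-loops yields a genuine walk in $G$ from $x$ to $y$ of length at most $\ell$, so $\ell \geq d(x,y) = r$, proving the claim.

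Isolating the $\ell = r$ contribution in \eqref{eq. HK from dirac} and factoring $t^{r+1}$ out of the tail gives
$$
H_G(x,y;t) - (-1)^r\,\frac{t^r}{r!}\,c_r(x,y) = t^{r+1}\sum_{\ell = r+1}^{\infty}(-1)^\ell\,\frac{t^{\ell - r - 1}}{\ell!}\,S_\ell(x,y),
$$
and the desired inequality follows upon taking absolute values, the series on the right being (up to sign) the quantity $C(x,y;t)$.

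It remains to establish the boundedness of $C(x,y;t)$ as $t \downarrow 0$. This is the only step that uses anything beyond the combinatorial identity above, and it follows directly from Theorem \ref{thm: formula for heat kernel}: the absolute and uniform convergence of the Neumann series on compact subsets of $VG\times VG\times\mathbb{R}_{\geq 0}$ translates into absolute convergence of the series defining $C(x,y;t)$ for $t$ in a neighborhood of $0$, and in fact $C(x,y;t)\to \frac{1}{(r+1)!}|S_{r+1}(x,y)|$ as $t \downarrow 0$. The main (and essentially only non-trivial) step is the combinatorial observation identifying $S_\ell \equiv 0$ for $\ell < r$; the rest amounts to bookkeeping together with an appeal to the convergence statement of Theorem \ref{thm: formula for heat kernel}.
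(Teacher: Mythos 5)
Your proposal is correct and follows essentially the same route as the paper, whose proof is the remark preceding the corollary: starting from the expansion \eqref{eq. HK from dirac}, one notes that the products $\delta_x(z_1)\cdots\delta_{z_{\ell-1}}(y)$ vanish for $\ell<r$ since any nonvanishing term corresponds to a walk (with possible pauses) of length $\ell\geq d(x,y)=r$, isolates the $\ell=r$ term, and bounds the tail by $t^{r+1}C(x,y;t)$, with boundedness of $C$ as $t\downarrow 0$ coming from the absolute convergence of the convolution series. Your identification of the limit $C(x,y;t)\to\frac{1}{(r+1)!}\left|S_{r+1}(x,y)\right|$ is a harmless extra observation beyond what the paper states.
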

Therefore, the heat kernel $H_{G}(x,y;t)$, for small time $t$ is asymptotically equal $ \frac{(-1)^r}{r!}c_r(x,y) t^r $. This was proved in \cite[Theorem 2.2.]{KLMST16}, with the constant multiplying $t^r$ and the upper bound $C(x,y;t)$ expressed in a different way.

Second, in section \ref{sec. further ex} we consider a parametrix which is inspired by the Gaussian as the heat kernel on $\mathbb{R}$ (see \cite{Gr09}, section 9 for generalizations and modifications in higher-dimensional setting).  Namely, in Proposition \ref{prop. Gaussian parametrix}
below we show that, assuming uniform boundedness of combinatorial vertex degree, for \emph{any} distance metric  $d$ on $G$ such that $d(x,y)\geq \delta>0$ for all distinct
points $x,y \in VG$, the function
$H_d:VG\times VG \times [0,\infty) \to [0,\infty) $, defined as
\begin{equation}\label{eq. defn param exp}
H_d(x,y;t):=\frac{1}{\sqrt{\theta(x)\theta(y)}} \exp(-(\theta(x)\theta(y)d^2(x,y))/t)
\,\,\,
\text{\rm with}
\,\,\,
H_d(x,y;0)=\lim_{t\downarrow 0} H_d(x,y;t)
\end{equation}
can be taken as a parametrix in our construction of the heat kernel.

\subsection{Applications}

An explicit expression for the heat kernel on an infinite graph associated to the bounded Laplacian in terms of a rather general parametrix can be applied to deduce a variety of results in different settings. In this section we will describe some of those.

\subsubsection{Explicit evaluations of heat kernels}

There has been extensive studies of heat kernels
within the field of spectral graph theory and geometry. However, as noted in \cite{LNY21}, there are very few instances of explicit evaluations of heat kernels on infinite graphs.  
The main examples are lattice graphs (see \cite{CY97, Be03, KN06}) and regular trees (see \cite{CY99}, \cite{CMS00} and \cite{CJK15}).  These formulas will be
rederived below from the general methodology we develop in this article (see Example \ref{ex. dirac seed tree}). For instance, let $G$ be a $(q+1)$-regular tree, which has special significance since it is the universal covering of every $(q+1)$-regular graph, then we have
\begin{equation*}
H_{G}(x,y;t)=q^{-r/2}e^{-(q+1)t}I_r(2\sqrt{q}t) - (q-1)e^{-(q+1)t}\sum_{j=1}^{\infty}q^{-(r+2j)/2}I_{2j+r}(2\sqrt{q}t).
\end{equation*}
where $I_n(t)$ is the classical $I$-Bessel function. This is the formula in \cite{CMS00, CJK15}.

\subsubsection{Applications of uniqueness property}

As already mentioned, comparing different heat kernel expressions often leads to significant consequences.
and \cite{JKS24} that use the heat kernel on the lattice graph and its quotient to obtain explicit evaluations of trigonometric sums.  See also  \cite{HSSS23} for series identities of $I$-Bessel functions, from which the authors deduce transformation formulas for the Dedekind eta-function.

By choosing a different metric on $G$, and a few examples are presented in section \ref{sec: metric} below,
we get different ``seeds'' $H_d$ in the parametrix construction. Given that in our setting the heat kernel is unique,
when equating the expressions for the heat kernels constructed using  $H_d$ with different distances $d$ yields many new
identities.  Going further, taking the Laplace transform of such identities may
be particularly useful since the Laplace transform of a time convolution of two functions equals the
product of Laplace transforms.

When the Laplacian extends to a bounded, self-adjoint operator on $L^2(\theta)$, the heat kernel possesses a spectral expansion in terms of the eigenvalues and the eigenfunctions of the Laplacian.
As such, when using the parametrix construction of the heat kernel and starting with the seed $H_d$, which is of a geometric nature,
one can also deduce identities relating the sum over the eigenvalues to the length spectrum of the graph.  This, in turn,
may serve as a starting point for deducing trace-type formulas, see for example \cite{CY97},  \cite{TW03}, \cite{HNT06} or \cite{Mn07}.
A similar argument in the setting of compact hyperbolic Riemann surfaces yields a succinct proof of the Selberg trace formula;  see Remark 3.3 of \cite{GJ18}.

\subsubsection{Applications to Gaussian heat kernel estimates}

Gaussian-type heat kernel estimates on graphs are very important tool for studying existence/nonexistence and behavior of global solutions of heat-type equations (such as linear or semilinear heat equations) on graphs, see e.g. \cite{Wu18}, \cite{Wu21} or \cite{Li24}. For this reason, it is of interest to derive such estimates, under certain assumptions on the geometry of a graph, see e.g. recent papers \cite{HLLY19}, \cite{AH21} or \cite{Wa23}.

Under reasonably mild assumptions (G1), (G2) and (G3') on a graph $G$, it is proved that for any metric $d$ on $G$, uniformly bounded from below, the function $H_d$ defined by \eqref{eq. defn param exp} can be taken as a parametrix. Therefore, the heat kernel on $G$ is expressed as a the following function on $VG\times VG \times [0,\infty)$ depending on the vertex and edge weights ($\theta$, $w$) and the metric $d$:
$$
H_G(x,y;t)=\frac{1}{\sqrt{\theta(x)\theta(y)}} \exp(-(\theta(x)\theta(y)d^2(x,y))/t)+ \mathcal{F}(x,y;t),
$$
where
$$
\mathcal{F}(x,y;t)= \sum_{\ell=1}^{\infty}(-1)^{\ell}\left(H_d\ast(L_{G,x}H_d)^{\ast\ell}\right)(x,y;t).
$$
One may then produce Gaussian-type bounds on convolution sums by reasoning analogously as in \cite{Ch84}, pp. 153--154. This application, which is well-suited for obtaining on-diagonal Gaussian-type bounds for any metric uniformly bounded from below, and which complements findings of \cite{HLLY19}, \cite{AH21} and \cite{Wa23} will be developed in a subsequent article.

\subsubsection{Construction of the resolvent kernel}

The resolvent kernel, or the Green's function, associated to the graph Laplacian can be expressed as the Laplace transform of the heat kernel (possibly truncated by the contribution from the eigenvalue zero), see e.g. \cite{CY00}.  Applying the formula 3.478.4 of \cite{GR07} with $\nu=p=1$, the expression for the heat kernel with the  ``seed'' $H_d$ can be used in order to deduce an explicit expression for the resolvent kernel in terms of series of products of $K$-Bessel functions with index $1$. We will exploit this construction of the resolvent kernel in a subsequent article.

In case one is interested in computing the resolvent kernel of a subgraph $G'$ of an infinite graph $G$, one may start with the heat kernel $H_G$ on $G$ as a parametrix and use it to derive the heat kernel on $G'$ as described in Theorem \ref{thm: formula for heat kernel} below. Then, by taking the Laplace transform of this expression (possibly truncated by the contribution from the eigenvalue zero) one will be able to express the resolvent kernel on $G'$ in terms of the resolvent kernel on $G$, see e.g. \cite{LNY23} for the case of a subgraph of a complete graph.

\subsubsection{Varying edge and vertex weights}

For a given graph $G$, with the edge weights function $w$ let $H_{G,w}$ be the heat kernel associated to the edge weights function $w$.  Let $w'$
denote another edge weights function on $G$, and $H_{G,w'}$ be the associated heat kernel.  Then
$H_{G,w}$ can be used as a parametrix for the heat kernel on $(G,w')$.  In fact, the convolution
series in Theorem \ref{thm: formula for heat kernel} gives a precise formula for the difference
$$
H_{G,w'}(x,y;t) - H_{G,w}(x,y;t).
$$
From this expression, it is possible to study the variation of the heat kernel in $w$, from
which one could study spectral invariants derived from the heat kernel such as regularized determinants
or asymptotic behavior as, say, one edge weight approaches zero.

For a given graph $G$, with the vertex weights functions $\theta$, $\theta'$ let $H_{G,\theta}$, $H_{G,\theta'}$ be the corresponding heat kernels. Then,
$\sqrt{\frac{\theta(x)\theta(y)}{\theta'(x)\theta'(y)}}H_{G,\theta}(x,y;t)$ can be used as a parametrix for the heat kernel on $(G,\theta')$.

\subsection{Organization of the paper}

This paper is organized as follows. We start in section \ref{ref our setup} by describing the assumptions on the graph $G$, followed by section \ref{sec: conv on a graph} where we define the convolution on the space
$L^p(\theta)$ and prove some of its properties, which ensure that a certain convolution series is convergent. In section \ref{sec:The-parametrix-construction} we define the notion of a parametrix and prove our main theorem, Theorem
\ref{thm: formula for heat kernel}, which computes the heat kernel by starting with a parametrix.
We then proceed by constructing different parametrix and their associated heat kernels. In section \ref{sec. Dirac delta param}
it is proved that the Dirac delta on $L^1(\theta)$ can be taken as the parametrix, under the assumption that the graph $G$ is locally finite. In section \ref{sec. further ex} we develop further examples, under assumption that the combinatorial
vertex degree is uniformly bounded on $G$.  In particular,
we show that the function
$H_d$ defined above can be taken as a parametrix for an arbitrary choice of metric $d$ on $G$ provided $d$  is uniformly bounded
from below.
We conclude the paper with several remarks discussing other natural conditions posed on a graph.

\section{Our setup}\label{ref our setup}

Let $G$ be a countably infinite, vertex-weighted, edge-weighted, and connected graph with the vertex set
$VG$.  The vertex weights are defined by the function $\theta: VG \to (0,\infty)$ with full support,
meaning $\theta(x) > 0$ for all $x \in VG$.
The edge weights are defined by a nonnegative function $w:VG\times VG\to [0,\infty)$, and
we shall use the notation $w_{xy} = w(x,y)$.
We assume that $w$ is symmetric, meaning that $w_{xy} = w_{yx}$ for all $x,y\in VG$, hence the graph is undirected.
If $w_{xy}=0$ we say there is no edge between the vertices $x$ and $y$.
When $w_{xy}>0$, the vertices $x$ and $y$ are said to be adjacent or neighbors, and we write $x\sim y$.
Furthermore, we assume that $w_{xx}=0$ for all $x$, meaning that $G$ has no loops. We refer to \cite{Fo13} p. 117 as well as \cite{Fo14} for an interesting probabilistic interpretation of the edge weights and the vertex weights.

Let $p\in[1,\infty]$ be a real number or $+\infty$, and let $L^p(\theta)$ denote the classical $L^p$ space of
functions on $VG$ with respect to the pointwise vertex measure $\theta$.  Specifically, the function $f:VG \to \mathbb{C}$ belongs
to $L^p(\theta)$ for $p\in (1,\infty)$ if and only if
$$
\sum_{x\in VG} |f(x)|^p\theta(x)<\infty.
$$
Since $\theta$ has a full support, any $f\in L^{\infty}(\theta)$ is such that $f$ is uniformly bounded on $VG$.
We denote by $\|f\|_{p,\theta}$ the $L^p$-norm of $f$.
When $p=2$, the space $L^2(\theta)$ is a Hilbert space with the inner product
$$
\langle f,g \rangle_\theta:= \sum_{x\in VG} f(x)\overline{g(x)}\theta(x).
$$

Following \cite{KL12}, we define the Laplace operator $\Delta_G$ acting on functions $f:VG\rightarrow\mathbb{C}$ by
\begin{equation} \label{eq. lapl operator}
\Delta_{G}f(x)=\frac{1}{\theta(x)}\sum_{y\in VG}(f(x)-f(y))w_{xy}.
\end{equation}
The natural domain of definition is $\mathcal{D}(\Delta_G):= \{f\in L^2(\theta):\, \Delta_G f \in  L^2(\theta) \}$.

For any $x \in VG$, define
$$
\mu(x):= \sum_{y:\, x\sim y} w_{xy}.
$$
For us, we will assume throughout this paper that $\theta$ need not equal $\mu$.
In general, the Laplacian $\Delta_G$ is not bounded, nor possesses a unique self-adjoint extension.
For this reason, different authors imposed the following assumptions on the (infinite) graph $G$. Throughout this paper, we assume that the graph $G$ is locally finite, meaning that $\mu(x)<\infty$ for all $x\in VG$. Moreover, we assume that the operator $\Delta_G$ is bounded on $L^2(\theta)$, which (see  page 66 of \cite{Da93}, \cite{HKLW12} or \cite[Theorem 1.27]{KLW21}) is equivalent to posing the following assumption
\begin{itemize}
\item [(G1)] \textbf{Boundedness of the Laplacian}. There exists a positive constant $M$ such that
\begin{equation}\label{eq:A_definition}
A(G,w,\theta):=\sup\limits_{x\in VG} \frac{\mu(x)}{\theta(x)}\leq M <\infty.
\end{equation}

\end{itemize}

For the weights, we assume
\begin{itemize}
\item[(G2)] {\bf Uniform lower bound for the vertex weights.}
There exists $\eta>0$ such that
$$
\inf_{x\in VG} \theta(x) >\eta,
$$
\end{itemize}
a condition which arises naturally in \cite{Fo14} and \cite{Wu18}.

\vskip .06in
\textit{Throughout this paper, the weighted graph $G$ satisfies conditions} (G1) \textit{and} (G2).

\vskip .06in
The \emph{heat kernel} on the
graph $G$ associated to the weighted graph Laplacian $\Delta_{G,x}$, when
acting on functions in the variable $x$, is the unique bounded
solution $H_{G}(x,y;t)$
to the differential equation
\[
\left(\Delta_{G,x}+\frac{\partial}{\partial t}\right)H_{G}(x,y;t)=0
\]
with the property that
\begin{equation}\label{eq. initial cond}
\lim_{t\to0}H_{G}(x,y;t)= \frac{1}{\theta(x)}\delta_{x=y}= \left\{
                            \begin{array}{ll}
                              \frac{1}{\theta(x)}, & \text{  if  } x=y \\
                              0, & \text{  if  } x\neq y
                            \end{array}
                          \right. ,
\end{equation}
where $\delta$ is the Kronecker delta function.

For the proof of the existence and uniqueness of the heat kernel on
$G$, we refer to  \cite{Do84}, \cite{DM06} for the case when $\theta(x)\equiv 1$ and to
\cite{Hu12}, in a general setting. Let us note that the uniqueness essentially follows
from the fact that (G1) and (G2) imply that $\Delta_G$ is stochastically complete; see
\cite[Theorem 4.3]{Wo21} as well as the extensive bibliography
therein.
Specifically, according to \cite[Theorem 3.3]{Wo21}, stochastic completeness is equivalent to unique dependence of the
bounded solution to the heat equation from its initial condition which are given by a bounded function on $G$.

\section{Convolution on $L^p(\theta)$    \label{sec: conv on a graph}}

For any $p\in[1,\infty]$ we denote by $q$ its conjugate, meaning the number $q\in[1,\infty]$  such that $\frac{1}{p} + \frac{1}{q}=1$,
or $p=1$ if $q=\infty$.
Let us start by defining the convolution of two functions that belong to conjugate $L^p(\theta)$ spaces.

\begin{definition}\label{def. conv}
With the notation as above, let $F_{1},F_{2}:VG\times VG\times\mathbb{R}_{>0}\to\mathbb{R}$ be two functions
such that for any $t>0$, those functions, when viewed as functions on $VG\times VG$, belong to $ L^p(\theta)$
in the second variable and $L^q(\theta)$ in the first variable, respectively. Assume further that for all
$b>0$ and for all $x,y \in VG$, the function $\langle F_1(x,\cdot;t-r), F_2(\cdot, y;r)\rangle_{\theta}$,
which is well defined, due to the H\"older inequality, is integrable on $[0,b]$. The \emph{convolution}
of functions $F_{1}$ and $F_{2}$ is
defined to be
\begin{align*}
(F_{1}\ast F_{2})(x,y;t)&:=\int\limits _{0}^{t} \langle F_1(x,\cdot;t-r), F_2(\cdot, y;r)\rangle_{\theta}dr \notag \\ &=\int\limits _{0}^{t}\sum_{z\in VG}F_{1}(x,z,t-r)F_{2}(z,y;r) \theta(z)dr.\label{def: convolution}
\end{align*}
\end{definition}

The above convolution is not commutative in general but it is associative, under suitable assumptions on functions convolved.
We have the following lemma.


\begin{lem}
\label{lem:convolution bounds} Let $F_{1},F_{2}:VG\times VG\times\mathbb{R}_{>0}\to\mathbb{R}$
be as in Definition \ref{def. conv}.  For some $t_{0} > 0$, assume there exist constants $C_1, C_2$ and integers $k,\ell\geq 0$
such that for all $0<t<t_{0}$ and all $x,y \in VG$, we have
$$
\left\|F_{1}(x,\cdot;t)\right\|_{p,\theta}\leq C_{1}t^{k}
  \,\,\,\,\,
  \text{\rm and}
  \,\,\,\,\,
  \left\|F_{2}(\cdot, y;t)\right\|_{q,\theta}\leq C_{2}t^{\ell}.
$$
Then, for all $x,y\in VG$
we have
\[
  |(F_{1}\ast F_{2})(x,y;t)|\leq C_{1}C_{2}\frac{k!\ell!}{(k+\ell+1)!}
  t^{k+\ell+1}
  \,\,\,\,\,
  \text{\rm for $0<t<t_{0}$.}
\]
\end{lem}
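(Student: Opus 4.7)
The plan is to bound the convolution pointwise by pulling the absolute value inside the integral, then applying H\"older's inequality on the inner product, then substituting the hypothesized norm bounds, and finally reducing the remaining time integral to a Beta function.

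First, by Definition \ref{def. conv} and the triangle inequality for integrals, I would write
\[
|(F_1 \ast F_2)(x,y;t)| \leq \int_0^t \bigl|\langle F_1(x,\cdot\,;t-r),\, F_2(\cdot,y;r)\rangle_\theta\bigr|\,dr.
\]
Next, since $p$ and $q$ are conjugate exponents, H\"older's inequality in the space $L^p(\theta)$--$L^q(\theta)$ gives
\[
\bigl|\langle F_1(x,\cdot\,;t-r),\, F_2(\cdot,y;r)\rangle_\theta\bigr| \leq \|F_1(x,\cdot\,;t-r)\|_{p,\theta}\,\|F_2(\cdot,y;r)\|_{q,\theta}.
\]
Applying the two hypothesized bounds $\|F_1(x,\cdot\,;s)\|_{p,\theta} \leq C_1 s^k$ and $\|F_2(\cdot,y;s)\|_{q,\theta} \leq C_2 s^\ell$, which are valid for all the relevant times in $(0,t_0)$, yields
\[
|(F_1 \ast F_2)(x,y;t)| \leq C_1 C_2 \int_0^t (t-r)^k r^\ell\,dr.
\]

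Finally, I would evaluate the remaining integral by the substitution $r = ts$, $dr = t\,ds$, which converts it into a standard Beta integral:
\[
\int_0^t (t-r)^k r^\ell\,dr = t^{k+\ell+1}\int_0^1 (1-s)^k s^\ell\,ds = t^{k+\ell+1}\,B(k+1,\ell+1) = t^{k+\ell+1}\,\frac{k!\,\ell!}{(k+\ell+1)!},
\]
using the classical identity $B(m+1,n+1) = m!\,n!/(m+n+1)!$ for nonnegative integers $m,n$. Combining the estimates gives exactly the desired inequality.

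There is no serious obstacle here: the argument is a textbook computation, and every step is legitimate under the integrability and H\"older hypotheses built into Definition \ref{def. conv}. The only small point to be careful about is that the bounds on the $L^p$ and $L^q$ norms are stated for $0 < t < t_0$, so one must remark that both $t-r$ and $r$ lie in $(0,t)\subset (0,t_0)$ for almost every $r \in (0,t)$, so the hypothesized estimates apply throughout the range of integration.
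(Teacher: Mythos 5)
Your proof is correct and follows essentially the same route as the paper: pull the absolute value inside the time integral, apply H\"older's inequality in the conjugate spaces $L^p(\theta)$ and $L^q(\theta)$, insert the hypothesized norm bounds, and evaluate the resulting Beta-type integral $\int_0^t (t-r)^k r^\ell\,dr = \frac{k!\,\ell!}{(k+\ell+1)!}t^{k+\ell+1}$. The only difference is cosmetic (the paper writes the integrand with the exponents swapped, which is equal by symmetry of the Beta integral), and your remark that $r$ and $t-r$ stay in $(0,t_0)$ is a harmless extra precision.
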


\begin{proof}
From the H\" older inequality we get
\begin{align*}
|(F_{1}\ast F_{2})(x,y;t)|&\leq \int\limits _{0}^{t} \left\| F_1(x,\cdot;t-r)\right\|_{p,\theta} \left\| F_2(\cdot, y;r)\right\|_{q,\theta}dr\\
&\leq C_1C_2\int_0^t r^k (t-r)^\ell\, dr=C_1C_2\frac{k!\ell!t^{k+\ell+1}}{(k+\ell+1)!},
\end{align*}
as claimed.
\end{proof}

Let $f=f(x,y;t):VG\times VG\times\mathbb{R}_{>0}\to\mathbb{R}$ be a function with the following property.  For any  $T>0$, for all $t\in (0,T]$,
and all arbitrary, but fixed, $x,y\in VG$ the functions $f(\cdot, y;t): VG \to \mathbb{R} $ and $f(x,\cdot;t): VG \to \mathbb{R} $
belong to $L^p(\theta)\cap L^q(\theta)$ and the function $f(x,y;\cdot) :(0,T] \to \mathbb{R}$ is integrable.
For any positive integer $\ell$ and any such function $f$, we can inductively define the $\ell$-fold
convolution $(f)^{\ast\ell}(x,y;t)$ for $t\in(0,T]$ by setting $(f)^{\ast1}(x,y;t)=f(x,y;t)$
and, for $\ell\geq2$ we put
$$
(f)^{\ast\ell}(x,y;t):=\left(f\ast(f)^{\ast(\ell-1)}\right)(x,y;t),
$$
under additional assumption that $(f)^{\ast(\ell-1)}(x,\cdot;t)\in L^q(\theta)$ for all $t\in(0,T]$ and all $\ell \geq 2$.

With this notation we have the following lemma.

\begin{lem}
\label{lem:convergence of the series} Let $f=f(x,y;t):VG\times VG\times\mathbb{R}_{>0}\to\mathbb{R}$.
Assume that for all $x,y\in VG$ and all $t_{0} \in \mathbb{R}_{>0}$, the function $f(x,y;\cdot)$ is integrable on
the interval $(0,t_{0}]$ and $f(x,\cdot;t) \in L^1(\theta)$ for all $t\in (0,t_0]$.
Assume further that for all $t_{0} \in \mathbb{R}_{>0}$ there exists a constant $C$, depending only upon $t_0$, and integer $k\geq 0$ such that
$$
|f(x,y;t)|\leq Ct^{k}
\,\,\,\,\,
\text{\rm for all $x,y\in VG$ and $0<t<t_{0}$.}
$$
Then the series
\begin{equation}\label{eq: series over ell}
\sum_{\ell=1}^{\infty}(-1)^{\ell}(f)^{\ast\ell}(x,y;t)
\end{equation}
converges absolutely and uniformly on every compact subset of $VG\times VG\times\mathbb{R}_{>0}$.
In addition, we have that
\begin{equation}
\left(f\ast\left(\sum_{\ell=1}^{\infty}(-1)^{\ell}(f)^{\ast\ell}\right)\right)(x,y;t)=
\sum_{\ell=1}^{\infty}(-1)^{\ell}(f)^{\ast(\ell+1)}(x,y;t).\label{eq: convolution with inf sum}
\end{equation}
and
\begin{equation}\label{eq:series_bound}
\sum_{\ell=1}^{\infty}\left|(f)^{\ast\ell}(x,y;t)\right|
=O(t^{k})
\,\,\,\,\,
\text{\rm as $t\to 0$,}
\end{equation}
where the implied constant is independent of $x,y\in VG$.
\end{lem}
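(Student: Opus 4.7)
The plan is to show, by induction on $\ell$, a pointwise upper bound of the form
\[
|(f)^{\ast\ell}(x,y;t)| \;\leq\; \alpha_{\ell}\, t^{(k+1)\ell-1}, \qquad 0<t\leq t_{0},
\]
with constants $\alpha_{\ell}$ decaying rapidly enough in $\ell$, through an accumulation of Beta-function factors from successive convolutions, that $\sum_{\ell}\alpha_{\ell}\,t^{(k+1)\ell-1}$ is absolutely convergent for every $t$. The central tool is Lemma \ref{lem:convolution bounds}, and the base case $\ell=1$ is simply the hypothesis $|f(x,y;t)|\leq Ct^{k}$.

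For the inductive step I would write $(f)^{\ast(\ell+1)}= f\ast (f)^{\ast\ell}$ and apply Lemma \ref{lem:convolution bounds} with the conjugate pair $p=1$, $q=\infty$. The inductive pointwise bound on $(f)^{\ast\ell}$ is already the $L^{\infty}(\theta)$ bound in the first variable, $\|(f)^{\ast\ell}(\cdot,y;r)\|_{\infty,\theta}\leq \alpha_{\ell}\, r^{(k+1)\ell-1}$, while the $L^{1}(\theta)$ assumption on $f(x,\cdot;s)$ combined with the uniform pointwise bound $|f(x,z;s)|\leq Cs^{k}$ must be leveraged to produce an estimate of the form $\|f(x,\cdot;s)\|_{1,\theta}\leq P\, s^{k}$ on $(0,t_{0}]$. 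Lemma \ref{lem:convolution bounds} then yields the recursion
\[
\alpha_{\ell+1}\;\leq\; P\,\alpha_{\ell}\, B\bigl(k+1,(k+1)\ell\bigr),
\]
where $B$ is the Beta function; iterating gives $\alpha_{\ell}=P^{\ell-1}C(k!)^{\ell}/\bigl((k+1)\ell-1\bigr)!$, which forces $\sum_{\ell}\alpha_{\ell}\,t^{(k+1)\ell-1}$ to be majorized by a super-exponentially convergent series. Since $VG$ is discrete, every compact subset of $VG\times VG\times \mathbb{R}_{>0}$ projects onto a finite subset of $VG\times VG$ and a compact interval in $\mathbb{R}_{>0}$, so convergence of the majorant uniformly on compact $t$-intervals promotes the convergence to be uniform on compact subsets of the full product, as claimed.

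Identity \eqref{eq: convolution with inf sum} then follows by interchanging the infinite sum with the summation and integration defining the convolution; the interchange is justified by the absolute, uniform convergence just obtained, via Fubini or dominated convergence (the dominating function being provided by the majorizing series above together with the $L^{1}(\theta)$ hypothesis on $f$). For \eqref{eq:series_bound}, the $\ell=1$ term contributes $|f(x,y;t)|\leq Ct^{k}$ while the tail $\sum_{\ell\geq 2}\alpha_{\ell}\,t^{(k+1)\ell-1}$ is of order $t^{2k+1}$, giving $O(t^{k})$ in total as $t\to 0$.

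The main obstacle I anticipate is precisely the step where the $L^{1}$ hypothesis $f(x,\cdot;s)\in L^{1}(\theta)$ (only a pointwise-in-$x$ finiteness statement) has to be upgraded to a bound $\|f(x,\cdot;s)\|_{1,\theta}\leq Ps^{k}$ with a constant $P$ \emph{uniform} in $x$; this uniformity is exactly what makes the implied constant in \eqref{eq:series_bound} independent of $x,y\in VG$, and it will have to be extracted by interlocking the pointwise bound $|f(x,z;s)|\leq Cs^{k}$ with the graph assumptions (G1)--(G2).
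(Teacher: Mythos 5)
Your overall skeleton is the same as the paper's: induct on $\ell$ using Lemma \ref{lem:convolution bounds} with the conjugate exponents $1$ and $\infty$, accumulate Beta-function factorials into a super-exponentially convergent majorant that is uniform in $x,y$, conclude uniform convergence on compacta, then justify \eqref{eq: convolution with inf sum} by Fubini/dominated convergence and read off \eqref{eq:series_bound}. The difference is the strength of the $L^1$ input you demand, and that is exactly where the gap lies. Your recursion requires $\|f(x,\cdot;s)\|_{1,\theta}\leq P s^{k}$ with $P$ uniform in $x$ and $s\in(0,t_0]$, and you propose to extract this from the pointwise bound $|f(x,z;s)|\leq Cs^{k}$ together with (G1)--(G2). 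This cannot work: since $\theta\geq\eta>0$ on the infinite set $VG$, the measure $\theta$ has infinite total mass, so a pointwise bound carries no $L^1$ information at all (e.g.\ $f\equiv Ct^{k}$ satisfies the pointwise bound but lies outside $L^1(\theta)$), and nothing in the stated hypotheses prevents $\|f(x,\cdot;t)\|_{1,\theta}$ from being unbounded in $x$ or from blowing up as $t\downarrow 0$, let alone forces it to decay like $t^{k}$.

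Two remarks toward a repair. First, you do not need the $s^{k}$ decay of the $L^1$ norm: a bound $\|f(x,\cdot;s)\|_{1,\theta}\leq P$ uniform in $x$ and $s\in(0,t_0]$ already gives, by the same induction, $|(f)^{\ast\ell}(x,y;t)|\leq C P^{\ell-1}t^{k+\ell-1}/(k+\ell-1)!$, which is precisely the majorant the paper uses; it sums, gives uniform convergence, and makes the tail $\sum_{\ell\geq 2}$ of order $t^{k+1}$, hence \eqref{eq:series_bound}. (Your claimed exponents $(k+1)\ell-1$ and the $O(t^{2k+1})$ tail depend on the unproved decaying bound and should be discarded.) Second, even the uniform constant $P$ is not literally contained in the lemma's hypotheses, which only assert $f(x,\cdot;t)\in L^1(\theta)$ pointwise in $x$ and $t$; the paper's own proof quietly writes $\|f\|_{1,\theta}$ as a single constant, i.e.\ it implicitly uses such uniformity (which does hold in the intended application $f=L_{G,x}H$). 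So the obstacle you flag is genuine, but the way out is to invoke (or make explicit) a uniform $L^1$ bound, not to manufacture a $t^{k}$-decaying one from (G1)--(G2); as written, that key step of your argument fails.
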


\begin{proof} Let $A$ be an arbitrary compact subset of $VG\times VG\times\mathbb{R}_{>0}$.
Let $t_0>0$ be such that $A\subseteq VG\times VG\times (0,t_0]$.
We apply Lemma \ref{lem:convolution bounds}, with $p=\infty$, $q=1$ to get that
\begin{equation*}
  \label{eq:4}
\left|(f\ast f)(x,y;t)\right| \leq C \|f\|_{1,\theta}\frac{t^{k+1}}{(k+1)!}, \,\,\,\,\,
\text{\rm for all $x,y\in VG$ and $0<t<t_{0}$.}
\end{equation*}
Similarly, by induction for $\ell\geq 1$ we have the bound that
\begin{equation*}
  \label{eq:mult_convolution_bound}
\left|(f^{\ast(\ell)})(x,y;t)\right|
\leq
C\|f\|_{1,\theta}^{\ell-1}\frac{t^{k+\ell-1}}{(k+\ell-1)!}, \,\,\,\,\,
\text{\rm for all $x,y\in VG$ and $0<t<t_{0}$.}
\end{equation*}
The assertion regarding the convergence of (\ref{eq: series over ell}) now follows
from the Weierstrass criterion and the fact that $A\subseteq VG\times VG\times (0,t_0]$.

Fix $t>0$. The series (\ref{eq: series over ell}) converges absolutely.  When viewed as
a function of $y$, for any arbitrary but fixed $x$ and for any $0<t<t_0$, the series belongs to $L^{\infty}(\theta)$. Therefore,
\begin{equation}\label{eq. conv intergcange int sum}
\left(f\ast\left(\sum_{\ell=1}^{\infty}(-1)^{\ell}(f)^{\ast\ell}\right)\right)(x,y;t)= \int\limits_0^t \sum_{z\in VG} f(x,z;t-r) \left( \sum_{\ell=1}^{\infty} (-1)^\ell (f)^{\ast \ell}(z,y;r)\right) \theta(z)  dr.
\end{equation}
From the bound \eqref{eq:mult_convolution_bound}, combined with the H\" older inequality with $p=1$, $q=\infty$ we have, for an arbitrary $t\in(0,t_0)$ and $0<r<t$ that
$$
\sum_{\ell=1}^{\infty}\sum_{z\in VG}\left| f(x,z;t-r) (f)^{\ast \ell}(z,y;r)\right|\theta(z)\leq C\|f\|_{1,\theta}t^k \exp(t \|f\|_{1,\theta}).
$$
Hence we may interchange the sum over $\ell$ with the sum over $z \in VG$ in \eqref{eq. conv intergcange int sum}. Reasoning analogously, one easily shows that we may interchange the infinite sum over $\ell$ with the integral from $0$ to $t$, to deduce that
$$
\left(f\ast\left(\sum_{\ell=1}^{\infty}(-1)^{\ell}(f)^{\ast\ell}\right)\right)(x,y;t)= \sum_{\ell=1}^{\infty} (-1)^\ell \int\limits_0^t \sum_{z\in VG} f(x,z;t-r)(f)^{\ast \ell}(z,y;r)\theta(z)  dr,
$$
which proves \eqref{eq: convolution with inf sum}.

Finally, the bound \eqref{eq:mult_convolution_bound} and the fact that the series  \eqref{eq: series over ell} converges absolutely on $VG\times VG\times (0,t_0]$ yield that
$$
\sum_{\ell=1}^{\infty}\left|(f)^{\ast\ell}(x,y;t)\right|\leq C\|f\|_{1,\theta}t^k \exp(t \|f\|_{1,\theta}),
$$
which proves \eqref{eq:series_bound}.
\end{proof}

\section{The parametrix construction of the heat         
  kernel on $G$\label{sec:The-parametrix-construction}}  

The heat operator $L_G$ on the graph $G$ is defined by
\begin{equation*}
  \label{def:heat_operator}
  L_G=\Delta_{G}+\frac{\partial}{\partial t}.
\end{equation*}
With this notation, the heat kernel $H_{G}$ on $G$ associated to the Laplacian $\Delta_G$ is the unique bounded solution
$H_G:VG\times VG\times [0,\infty)$ to the differential equation
\[
L_{G,x}H_{G}(x,y;t)=0
\]
satisfying the initial condition \eqref{eq. initial cond}.
The subscript $x$ on $L_{x}$ indicates the sum \eqref{eq. lapl operator} which defines the Laplacian is
over neighbors of $x$, the first space variable.

\begin{definition}\label{def. parametrix}
Let $k\geq 0$ be an integer. A \emph{parametrix} $H$ order $k$  for the heat operator $L_{G}$ on $G$ is any continuous function $H=H(x,y;t):VG\times VG\times [0,\infty)$
which is smooth in time variable $t$, integrable in each space variable, and satisfies the following properties.
\begin{enumerate}
\item For all $x,y\in VG$,
\begin{equation}
H(x,y;0) = \lim\limits_{t \rightarrow 0}
H(x,y;t)=\frac{1}{\theta(x)}\delta_{x=y}. \label{eq:dirac property of heat kernel}
\end{equation}
\item The function $L_{G,x}H(x,y;t)$ extends to a continuous function on  $VG\times VG\times [0,\infty)$.

 \item  For all  $x\in VG$, $t>0$ we have that $\Delta_{G,x}H(x,\cdot;t)$ and $\frac{\partial}{\partial t}  H(x,\cdot;t)$ are in $L^1(\theta)$.
\item For any $t_0>0$ there exists a constant $C=C(t_0)$, depending only on
$t_0$, such that
$$
|L_{G,x}H(x,y;t)| \leq C(t_0)t^k
\,\,\,\,\,
\text{\rm for $t \in (0, t_0]$}
\,\,\,
\text{\rm and all}
\,\,\,
\text{\rm $x,y\in VG$.}
$$
\end{enumerate}
\end{definition}
Note that the third assumption on the parametrix $H$ implies that $L_{G,x}H(x,\cdot;t)\in L^1(\theta)$.

\begin{lem}
\label{lem: L of convol} Let $H$ be a parametrix for the heat operator on $G$ of any order.
Let $f=f(x,y;t):VG\times VG\times\mathbb{R}_{> 0}\to\mathbb{R}$
be a continuous function in $t$ for all $x,y\in VG$. Assume further that for all $t>0$ the function
$f(x,y;t)$ when viewed as a function on $VG\times VG$ is uniformly bounded.  Then
$$
L_{G,x}( H\ast f)(x,y;t)=f(x,y;t)+(L_{G,x}H \ast f)(x,y;t)
$$
for all $x,y\in VG$ and $t\in\mathbb{R}_{>0}$.
\end{lem}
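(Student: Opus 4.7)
The plan is to split $L_{G,x} = \partial_t + \Delta_{G,x}$ and handle the time derivative and the discrete Laplacian separately. The isolated $f(x,y;t)$ term on the right-hand side will emerge as a boundary contribution when one differentiates the variable upper limit of the convolution integral, by virtue of the Dirac initial condition on $H$; the remaining piece $(L_{G,x}H)\ast f$ is obtained by moving $L_{G,x}$ past the convolution, using the integrability hypotheses on $H$ together with (G1) to justify the interchange of summation and integration.

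First I would write
$$
(H\ast f)(x,y;t) = \int_0^t \sum_{z\in VG} H(x,z;t-r)\,f(z,y;r)\,\theta(z)\,dr
$$
and differentiate in $t$ via Leibniz's rule for a variable upper limit. The boundary contribution at $r=t$ is $\sum_z H(x,z;0)\,f(z,y;t)\,\theta(z)$, which collapses to $f(x,y;t)$ thanks to property \eqref{eq:dirac property of heat kernel}. The remaining interior integral equals $(\partial_t H\ast f)(x,y;t)$; bringing the time derivative inside the sum over $z$ is permitted by property (3) of Definition \ref{def. parametrix}, which places $\partial_t H(x,\cdot;t)$ in $L^1(\theta)$, combined with the uniform boundedness of $f$. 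Consequently $\partial_t(H\ast f) = f + (\partial_t H)\ast f$.

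Next I would expand
$$
\Delta_{G,x}(H\ast f)(x,y;t) = \frac{1}{\theta(x)}\sum_{y'} w_{xy'}\bigl[(H\ast f)(x,y;t) - (H\ast f)(y',y;t)\bigr]
$$
and push the outer weighted sum $\frac{1}{\theta(x)}\sum_{y'} w_{xy'}$ past $\int_0^t$ and $\sum_z$, combining the pieces inside to produce $\Delta_{G,x}H(x,z;t-r)$. The resulting expression is precisely $(\Delta_{G,x}H)\ast f$. Adding the two steps then gives $L_{G,x}(H\ast f) = f + (L_{G,x}H)\ast f$, as claimed.

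The main obstacle will be the rigorous justification of this last Fubini-type interchange in the Laplacian step, because a triple sum/integral in $(y',z,r)$ must be shown to converge absolutely. The dominants come from combining three ingredients: by (G1) the neighbor weight $\sum_{y'} w_{xy'} = \mu(x) \leq M\theta(x)$ is finite, by property (3) of Definition \ref{def. parametrix} each function $\Delta_{G,x}H(x,\cdot;t)$ lies in $L^1(\theta)$, and $f$ is uniformly bounded. The $\partial_t$ step, by contrast, reduces essentially to Leibniz's rule once the Dirac initial condition is invoked.
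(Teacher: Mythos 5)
Your proposal is correct and follows essentially the same route as the paper's proof: the same splitting $L_{G,x}=\partial_t+\Delta_{G,x}$, the same Leibniz-rule differentiation of the variable upper limit with the boundary term collapsing to $f(x,y;t)$ via the Dirac initial condition \eqref{eq:dirac property of heat kernel}, and the same use of property (3) of Definition \ref{def. parametrix} together with the uniform boundedness of $f$ to justify the interchanges. The only difference is that you spell out the Fubini-type interchange behind $\Delta_{G,x}(H\ast f)=(\Delta_{G,x}H)\ast f$, which the paper dispatches with the one-line remark that the Laplacian acts only on the first variable.
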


\begin{proof} As stated $f(\cdot, y;t)\in L^{\infty}(\theta)$ and $H(x,\cdot;t), \, \Delta_{G,x}H(x,\cdot;t)\in L^1(\theta)$
for all $(x,y;t) \in VG\times VG\times (0,\infty)$.  Therefore, the convolutions $H\ast f$ and $(\Delta_{G,x}H)\ast f$ are well defined,
and we have that
\begin{align}\notag
L_{G,x}(H\ast f)(x,y;t)&=
    \frac{\partial}{\partial t}(H\ast f)(x,y;t)
    + \Delta_{G,x}(H\ast f)(x,y;t)\\ &=
  \frac{\partial}{\partial t}(H\ast f)(x,y;t)+  \left( (\Delta_{G,x}H)\ast f\right)(x,y;t),\label{eq:heat_of_convolution}
\end{align}
where the second equation follows from the fact that the Laplacian acts on the first variable only.

The function $\sum_{z\in VG}H(x,z;t-r)f(z,y;r) \theta(z)$ is continuous in the time variable, so we can apply the
Leibniz integration formula.  Upon doing so, we obtain that
the first term on the right hand side of \eqref{eq:heat_of_convolution} is equal to
\begin{align} \label{eq.deriv in t}
\frac{\partial}{\partial t}&\int\limits _{0}^{t}\sum_{z\in VG}H(x,z;t-r)f(z,y;r) \theta(z)dr\\&
=\sum_{z\in VG}H(x,z;0)f(z,y;t)\theta(z)+\int\limits _{0}^{t} \frac{\partial}{\partial t} \sum_{z\in VG}
H(x,z;t-r)f(z,y;r) \theta(z)dr.\notag
\end{align}
The assumptions that $f$ is uniformly bounded and that $\frac{\partial}{\partial t}  H(x,\cdot;t)\in L^1(\theta)$ combine
to yield that
$$
\frac{\partial}{\partial t} \sum_{z\in VG}
H(x,z;t-r)f(z,y;r) \theta(z)= \sum_{z\in VG}\frac{\partial}{\partial t} H(x,z;t-r)f(z,y;r) \theta(z).
$$
Given that $H(x,z;0)=0$ unless $x=z$, we get from \eqref{eq.deriv in t} that
$$
\frac{\partial}{\partial t}\int\limits _{0}^{t}\sum_{z\in VG}H(x,z;t-r)f(z,y;r) \theta(z)dr= f(x,y;t) + \left(\frac{\partial}{\partial t}H\ast f\right)(x,y;t).
$$
Therefore,
\begin{align*}
L_{G,x}(H\ast f)(x,y;t) &=
  \frac{\partial}{\partial t}(H\ast f)(x,y;t)+   (\Delta_{G,x}H\ast f)(x,y;t)
\\&=f(x,y;t)+\left(\frac{\partial}{\partial t}H\ast f\right)(x,y;t)+(\Delta_{G,x}H\ast f)(x,y;t)
\\&=f(x,y;t)+(L_{G,x}H\ast f)(x,y;t),
\end{align*}
as claimed.
\end{proof}

With all this, we now can state the main theorem in this section.

\begin{thm}
\label{thm: formula for heat kernel} Let $H$ be a parametrix of
order $k\geq0$ for the heat operator on $G$.  For $x,y\in VG$ and $t\in \mathbb{R}_{\geq 0}$, let
\begin{equation}\label{eq:Neuman_series}
F(x,y;t):=\sum_{\ell=1}^{\infty}(-1)^{\ell}
(L_{G,x}H)^{\ast\ell}(x,y;t).
\end{equation}
Then the Neumann series \eqref{eq:Neuman_series}
converges absolutely and uniformly on every compact subset of $VG\times VG\times\mathbb{R}_{\geq 0}$.
Furthermore, the heat kernel $H_{G}$ on $G$
associated to graph Laplacian $\Delta_{G,x}$ is given by
\begin{equation}\label{eq:heat_kernel_parametrix}
H_{G}(x,y;t)=H(x,y;t)+(H\ast F)(x,y;t)
\end{equation}
and
$$
(H\ast F)(x,y;t)
= O(t^{k+1})
\,\,\,\,\,
\text{\rm as $t \rightarrow 0$.}
$$
\end{thm}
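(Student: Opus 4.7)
The plan is to apply the convolution machinery of section \ref{sec: conv on a graph} to $f := L_{G,x}H$ in order to build the Neumann series $F$, and then to verify that $H_G := H + H\ast F$ is a bounded solution of the heat equation with the Dirac initial condition; the stated uniqueness of bounded heat kernels will then identify it with the actual heat kernel.

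First I would check that $f=L_{G,x}H$ satisfies the hypotheses of Lemma \ref{lem:convergence of the series}: part (2) of Definition \ref{def. parametrix} gives continuity of $f$ in $t$ (hence local integrability in $t$), part (3) gives $f(x,\cdot;t)\in L^1(\theta)$, and part (4) supplies the uniform bound $|f(x,y;t)|\leq Ct^k$ on $(0,t_0]$. The lemma then delivers at once the absolute and uniform convergence of $F$ on compact subsets of $VG\times VG\times\mathbb{R}_{>0}$, the identity \eqref{eq: convolution with inf sum}, and the estimate $|F(x,y;t)|=O(t^k)$ uniform in $x,y$ via \eqref{eq:series_bound}. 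The remainder bound $(H\ast F)(x,y;t)=O(t^{k+1})$ then follows from Lemma \ref{lem:convolution bounds} with $p=1,\,q=\infty$, using the local-in-$t$ uniform bound $\|H(x,\cdot;t)\|_{1,\theta}\leq C_1$ (from continuity of $H$ together with its limit at $t=0$) and $\|F(\cdot,y;t)\|_{\infty,\theta}\leq C_2 t^k$. In particular $(H\ast F)(x,y;0)=0$, so $H_G$ has the required initial value $\tfrac{1}{\theta(x)}\delta_{x=y}$.

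The core of the argument is the verification that $L_{G,x}H_G=0$. Since $F$ is continuous in $t$ (by uniform convergence on compact subsets) and uniformly bounded on $VG\times VG$ for $t$ in compact subsets of $(0,\infty)$ by \eqref{eq:series_bound}, Lemma \ref{lem: L of convol} applies with $f=F$ and yields
\begin{equation*}
L_{G,x}(H\ast F)=F+(L_{G,x}H)\ast F.
\end{equation*}
Identity \eqref{eq: convolution with inf sum}, applied now with $f=L_{G,x}H$, gives
\begin{equation*}
(L_{G,x}H)\ast F=\sum_{\ell=1}^{\infty}(-1)^\ell (L_{G,x}H)^{\ast(\ell+1)}=-\sum_{\ell=2}^{\infty}(-1)^\ell (L_{G,x}H)^{\ast \ell},
\end{equation*}
while by definition $F=\sum_{\ell=1}^{\infty}(-1)^\ell (L_{G,x}H)^{\ast \ell}$. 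Adding, all terms with $\ell\geq 2$ telescope and only the $\ell=1$ term of $F$ survives, producing $F+(L_{G,x}H)\ast F=-L_{G,x}H$. Therefore $L_{G,x}(H+H\ast F)=L_{G,x}H-L_{G,x}H=0$.

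Finally, the bounds above show $H+H\ast F$ is bounded on $VG\times VG\times[0,t_0]$ for every $t_0>0$, so the stochastic completeness of $\Delta_G$ (which follows from (G1) and (G2) as recalled after \eqref{eq. initial cond}) permits invoking the uniqueness of bounded solutions and concludes $H+H\ast F=H_G$. The most delicate step is legitimizing the interchange of $L_{G,x}$ with the infinite convolution sum in order to write $L_{G,x}(H\ast F)=F+(L_{G,x}H)\ast F$; this is precisely the content of Lemmas \ref{lem:convergence of the series} and \ref{lem: L of convol}, so with those two lemmas in hand the proof is essentially an orchestrated telescoping identity.
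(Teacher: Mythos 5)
Your proposal is correct and follows essentially the same route as the paper: Lemma \ref{lem:convergence of the series} applied to $f=L_{G,x}H$ for the convergence, the $O(t^k)$ bound and identity \eqref{eq: convolution with inf sum}, Lemma \ref{lem:convolution bounds} for the $O(t^{k+1})$ remainder and the initial condition, and Lemma \ref{lem: L of convol} followed by the telescoping cancellation to get $L_{G,x}(H+H\ast F)=0$, with uniqueness of the bounded solution closing the argument. Your write-up is, if anything, slightly more explicit than the paper about verifying the hypotheses of the lemmas and about the role of boundedness and stochastic completeness.
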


\begin{proof}
Set
$$
\tilde{H}(x,y;t) := H(x,y;t)+(H\ast F)(x,y;t).
$$
We want to show that
\begin{equation}\label{eq:heat_kernel_criteria}
L_{G,x}\tilde{H}(x,y;t)=0
\,\,\,\,\,
\text{\rm and}
\,\,\,\,\,
\lim_{t\to 0} \tilde{H}(x,y,t)=\frac{1}{\theta(x)}\delta_{x=y}.
\end{equation}
By Lemma \ref{lem:convergence of the series}, the series $F(x,y;t)$ defined in \eqref{eq:Neuman_series}
converges uniformly and absolutely and has order $O(t^{k})$ as $t \rightarrow 0$.  Since $H$ is in $L^1(\theta)$,
 Lemma \ref{lem:convolution bounds} then yields the asymptotic bound that
$$
(H\ast F)(x,y;t)= O(t^{k+1})
\,\,\,\,\,
\text{\rm as $t \rightarrow 0$.}
$$
Therefore,
$$
\lim_{t\to 0}\widetilde{H}(x,y;t)=\lim_{t\to 0}H(x,y;t)=\frac{1}{\theta(x)}\delta_{x=y}.
$$
It remains to prove the vanishing of $L_{G,x}\tilde{H}$ in \eqref{eq:heat_kernel_criteria}.
For this, we can apply Lemma \ref{lem: L of convol} to get that

\begin{align*}
L_{G,x}\tilde{H}(x,y;t)&=
L_{G,x}H(x,y;t)+L_{G,x}(H\ast F)(x,y;t)
\\&=
L_{G,x}H(x,y;t)+\sum_{\ell=1}^{\infty}(-1)^{\ell}(L_{G,x}
{H})^{\ast\ell}(x,y;t)\\ &\,\,\,\,\,\, +(L_{G,x}{H})*\left(\sum_{\ell=1}^{\infty}(-1)^{\ell}(L_{G,x}{H})^{\ast(\ell)}\right)(x,y;t)\\&=
L_{G,x}H(x,y;t)+\sum_{\ell=1}^{\infty}(-1)^{\ell}(L_{G,x}
{H})^{\ast\ell}(x,y;t)+\sum_{\ell=1}^{\infty}(-1)^{\ell}(L_{G,x}{H})^{\ast(\ell+1)}(x,y;t)\\&
=0,
\end{align*}
To be precise, in the above calculations we used
that absolute convergence of the series defining $F(x,y;t)$
in order to change the order of summation.  This completes the proof.
\end{proof}

\section{Dirac delta as a parametrix}\label{sec. Dirac delta param}

In \eqref{eq:heat_kernel_parametrix}, the function $H(x,y;t)$ is a
parametrix, so it is required to satisfy the reasonably weak conditions given in its definition.  In particular, one does not use any information about the edge structure
associated to the graph. However, the edge data is essential in the definition of the Laplacian, which is used in the construction of the series \eqref{eq:Neuman_series} through the heat operator. In this section we highlight the role of the edge data in the construction.

We assume that an infinite, locally finite graph $G$ satisfies assumptions (G1), (G2) and the following additional assumption.
\begin{itemize}
\item [(G3)] {\bf Finiteness of the combinatorial vertex degree.}
For all $x\in VG$ the number of $y\in VG$ such that $w_{xy}>0$ is finite.
\end{itemize}

We have the following proposition.

\begin{prop}\label{prop. dirac delta HK}
Let $G$ be a connected, locally finite, undirected infinite graph satisfying assumptions (G1), (G2) and (G3) above. Let $H(x,y;t)$ be a function on
$VG\times VG\times [0,\infty)$ defined for all $x,y \in VG$ and all $t\in[0,\infty)$ by
\begin{equation}\label{eq:delta_parametrix}
H(x,y;t) :=\frac{1}{\theta(x)}\delta_{x=y}.
\end{equation}
Let $\delta_x(y)$
be defined as in \eqref{eq:delta_definition}.
Then, $H(x,y;t)$ defined by \eqref{eq:delta_parametrix} is the parametrix of order $k=0$, and the heat kernel $H_G$ constructed when using
\eqref{eq:delta_parametrix} as a parametrix is given by \eqref{eq. HK from dirac}.
\end{prop}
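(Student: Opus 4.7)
The plan is twofold: (a) verify that $H(x,y;t)=\delta_{x=y}/\theta(x)$ satisfies the four conditions of Definition \ref{def. parametrix} with $k=0$, and then (b) insert this $H$ into Theorem \ref{thm: formula for heat kernel} and identify the resulting Neumann series with the expansion \eqref{eq. HK from dirac} by direct induction.

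For part (a), condition (1) is immediate from the definition, and since $H$ does not depend on $t$ the time-continuity parts of (2) and (3) are automatic. A direct computation from \eqref{eq. lapl operator}, splitting into the cases $y=x$, $y\sim x$ with $y\neq x$, and otherwise, yields
$$
L_{G,x}H(x,y;t)=\Delta_{G,x}H(x,y;t)=\frac{\delta_x(y)}{\theta(y)}
$$
in the notation of \eqref{eq:delta_definition}. Under (G3) this function of $y$ has finite support for each fixed $x$, which delivers the $L^1(\theta)$-membership in (3) and the continuity in (2). For the uniform bound in (4) with $k=0$, assumption (G1) combined with (G2) controls both the diagonal term $\mu(x)/\theta(x)^2\leq M/\eta$ and the off-diagonal terms $w_{xy}/(\theta(x)\theta(y))\leq M/\eta$, the latter via $w_{xy}\leq \mu(x)\leq M\theta(x)$.

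For part (b), I proceed by induction on $\ell$ to evaluate the iterated convolutions. Setting $\phi_{\ell}(x,y):=\sum_{z_1,\ldots,z_{\ell-1}\in VG}\delta_x(z_1)\delta_{z_1}(z_2)\cdots\delta_{z_{\ell-1}}(y)$, a finite sum by (G3), the induction hypothesis is
$$
(L_{G,x}H)^{\ast\ell}(x,y;t)=\frac{t^{\ell-1}}{(\ell-1)!\,\theta(y)}\,\phi_\ell(x,y).
$$
The step $\ell\to\ell+1$ hinges on a telescoping cancellation: when one forms $(L_{G,x}H)\ast(L_{G,x}H)^{\ast\ell}$ via Definition \ref{def. conv}, the vertex-measure factor $\theta(z)$ appearing in the convolution exactly absorbs the $1/\theta(z)$ inside $L_{G,x}H(x,z)=\delta_x(z)/\theta(z)$, so the spatial part collapses to $\sum_z\delta_x(z)\phi_\ell(z,y)=\phi_{\ell+1}(x,y)$ while the time integral $\int_0^t r^{\ell-1}/(\ell-1)!\,dr$ contributes the factor $t^\ell/\ell!$. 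Finally, since the Dirac parametrix reduces convolution against $H$ to the time integral $(H\ast f)(x,y;t)=\int_0^t f(x,y;r)\,dr$, integrating the Neumann series termwise and adding $H$ itself produces the expansion \eqref{eq. HK from dirac}.

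The chief technical subtlety lies in justifying both the termwise integration and the interchange of the series over $\ell$ with the spatial summation inside the convolution. This is precisely what Lemma \ref{lem:convergence of the series} provides: the uniform bound on $L_{G,x}H$ established in (a), together with $\|L_{G,x}H(x,\cdot;t)\|_{1,\theta}$ being finite and uniformly controlled in $x,t$ (via (G1)--(G3) since the support is finite), supplies the absolute convergence needed. The chain sums defining $\phi_\ell(x,y)$ are themselves finite under (G3), so no further analytic input is required; the substantive conceptual content of the proof is the $\theta$-weight cancellation that converts the analytic convolution Neumann series into the purely combinatorial chain of $\delta$'s.
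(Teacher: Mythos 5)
Your overall strategy coincides with the paper's: verify the four conditions of Definition \ref{def. parametrix} for the Dirac function (using (G3) for finiteness of the relevant sums and (G1)--(G2) for the order-$0$ bound), then evaluate the Neumann series of Theorem \ref{thm: formula for heat kernel} by induction on the convolution powers, with Lemma \ref{lem:convergence of the series} covering the interchanges. Your pointwise evaluation $L_{G,x}H(x,y;t)=\delta_x(y)/\theta(y)=\delta_y(x)/\theta(x)$, i.e.\ with the factor $1/(\theta(x)\theta(y))$ on off-diagonal entries, is the correct one, and your induction
\[
(L_{G,x}H)^{\ast\ell}(x,y;t)=\frac{t^{\ell-1}}{(\ell-1)!\,\theta(y)}\,\phi_\ell(x,y),
\qquad
\phi_\ell(x,y)=\sum_{z_1,\ldots,z_{\ell-1}\in VG}\delta_x(z_1)\cdots\delta_{z_{\ell-1}}(y),
\]
is carried out correctly, as is the reduction $(H\ast f)(x,y;t)=\int_0^t f(x,y;r)\,dr$.

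The one step that does not go through as written is the very last sentence. What your formulas actually deliver is
\[
H_G(x,y;t)=\frac{1}{\theta(x)}\delta_{x=y}
+\frac{1}{\theta(y)}\sum_{\ell=1}^{\infty}(-1)^{\ell}\frac{t^{\ell}}{\ell!}\,\phi_\ell(x,y),
\]
so every series term carries an extra factor $1/\theta(y)$ compared with \eqref{eq. HK from dirac} as printed; the two expressions coincide verbatim only when $\theta$ is constant (as in the worked examples). Claiming that termwise integration ``produces the expansion \eqref{eq. HK from dirac}'' therefore does not follow from your own displayed induction hypothesis. For comparison, the paper reaches the printed formula by recording $L_{G,x}H(x,y;t)$ with a factor $1/\theta^2(x)$ in \eqref{eq. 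L of Dirac} and then using $\theta(z)L_{G,z}H(z,y;\tau)=\delta_z(y)$, whereas the direct computation gives $\theta(z)L_{G,z}H(z,y;\tau)=\delta_y(z)$; so the mismatch is a $\theta$-bookkeeping issue in the target formula rather than a flaw in your argument. Still, a complete proof must confront it explicitly: either carry the $1/\theta(y)$ through to the final expansion (and note how the stated \eqref{eq. HK from dirac} should be read), or show why, under some additional normalization, the two expressions agree -- rather than asserting a literal match that your induction formula does not produce.
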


\begin{proof}
The set $VG$ is discrete, so then $H(x,y;t)$ is continuous on $VG\times VG\times [0,\infty)$ and smooth in $t$ for fixed $x$ and $y$.
It is evident that $H$ belongs to $L^1(\theta)$ in both space variables and, furthermore, satisfies the initial
condition \eqref{eq. initial cond}.  Trivially,
\begin{equation}\label{eq. L of Dirac}
L_{G,x}H(x,y;t)= \Delta_{G,x}H(x,y;t) =\frac{1}{\theta^2(x)}\left\{
                                                             \begin{array}{ll}
                                                               \mu(x), & x=y; \\
                                                               -w_{xy}, & x\sim y \\
                                                               0, & \text{otherwise.}
                                                             \end{array}
                                                           \right.
\end{equation}
The function \eqref{eq. L of Dirac} is continuous on $VG\times VG\times [0,\infty)$,
hence the second condition for the parametrix in Definition \ref{def. parametrix} is fulfilled.
Moreover, $\Delta_{G,x}H(x,\cdot;t)\in L^1(\theta)$ because the sum
$$
\sum_{z\in VG} \theta(z)\Delta_{G,x}H(x,z;t)
$$
is a finite sum, by the assumption (G3). Since $\frac{\partial}{\partial t} H(x,\cdot; t) \equiv 0$, we conclude
that the third condition for the parametrix is fulfilled. Finally, from \eqref{eq. L of Dirac} it is evident
that for all $x,y \in VG$  and all $t>0$ we have
$$
|L_{G,x}H(x,y;t)| \leq \frac{\mu(x)}{\theta^2(x)} \leq \frac{M}{\eta}.
$$
Therefore, $H(x,y;t)$ is a parametrix of order $k=0$.

The heat kernel, as constructed in Theorem \ref{thm: formula for heat kernel} when using the parametrix $H$, is given by
\begin{equation}\label{eq:heat_kernel dirac}
H_{G}(x,y;t)=H(x,y;t)+ \sum_{\ell=1}^{\infty}(-1)^{\ell}
\left(H\ast(L_{G,x}H)^{\ast\ell}\right)(x,y;t)
\end{equation}
We have the following evaluation of the first few terms in the convolution series \eqref{eq:heat_kernel dirac}.  First,
\begin{align*}
(H\ast L_{G,x}H)(x,y;t) &= \int\limits_0^t \sum_{z\in VG} \theta(z) H(x,z;t-\tau)L_{G,z}H(z,y;\tau) d\tau = t \theta(x)\frac{ \delta_x(y)}{\theta(x)}
\\&  =t\delta_x(y).
\end{align*}
Using that $
\theta(z)L_{G,z}H (z,y;\tau) = \delta_z(y)
$ we get
\begin{align*}
(H\ast L_{G,x}H)\ast L_{G,x}H (x,y;t) &=\int\limits_0^t \sum_{z\in VG} \theta(z) (t-\tau)\delta_x(z)L_{G,z}H (z,y;\tau)d\tau\\
&=\frac{t^2}{2}\sum_{z\in VG} \delta_x(z) \delta_z(y).
\end{align*}
Next, when proceeding by induction, we deduce that
$$
\left(H\ast(L_{G,x}H)^{\ast\ell}\right)(x,y;t)=\frac{t^{\ell}}{\ell !}\sum_{z_1,\ldots,z_{\ell-1} \in VG}
 \delta_x(z_1)\delta_{z_1}(z_2)\cdot \ldots \cdot \delta_{z_{\ell-1}}(y).
$$
The sum on the right-hand side is finite, since $\delta_x(y)$ is supported on a finite set, due to (G3).

With all this, we conclude that the heat kernel on $G$ is given by \eqref{eq. HK from dirac}.
\end{proof}


\begin{example}
Consider the case when $G=\mathbb{Z}$, meaning the graph whose set of vertices is the
set of integers.  The two vertices $x,y \in \mathbb{Z}$ are connected if and only if $x-y\in\{-1,1\}$.  For
every $x \in G$, let the vertex weight be
$\theta(x) = 1$, and assume all edges weights are also equal to one.   Let us use \eqref{eq. HK from dirac} to
compute the heat kernel on $\mathbb{Z}$.

The product $ \delta_x(z_1)\delta_{z_1}(z_2)\cdot \ldots \cdot \delta_{z_{\ell-1}}(y)$ in \eqref{eq. HK from dirac}
is non-zero precisely when the sequence $x=z_0,z_1,\ldots, z_{\ell-1},
y=z_\ell \in \mathbb{Z}$ is such that $z_h-z_{h-1}\in\{-1,0,1\}$ for all $h=1,\ldots,\ell$.  Such a sequence
can be identified with an $\ell$-tuple $(a_1,\ldots, a_\ell)$
where $a_1,\ldots, a_\ell \in\{-1,0,1\}$.

For $x,y\in \mathbb{Z}$, let $j\geq 0$ be such that $x-y=j$; we will comment later when $x-y=-j$.
Then the $\ell$-tuple  $(a_1,\ldots, a_\ell)$ must have exactly $j$ places
all with the values $1$. Assume that $k\geq 0$ is the number of places $a_h$ in the $\ell$-tuple  $(a_1,\ldots, a_\ell)$ which are equal
to zero. Then at the remaining  $\ell-j-k$ places there must be the same number of entries with $1$ and with $-1$; in particular,
$k$ must be such that $\ell-j-k$ is an even number, say $i$, so we have that $\ell-j-k=2i$.

Therefore, every $\ell$-tuple $(a_1,\ldots, a_\ell)$ corresponding
to the sequence $x=z_0,z_1,\ldots, z_{\ell-1}, y=z_\ell \in \mathbb{Z}$ such that $z_h-z_{h-1}\in\{-1,0,1\}$ and such that $x-y=j$
is uniquely determined by $k$ places at which there are zeros, where $k$ is such that $\ell-k-j$ is even, and by $i=\frac{1}{2}(\ell -k-j)$
places at which there are the numbers $-1$.  The remaining places all have the value $1$.
The number of all such $\ell$-tuples of elements from $\{-1,0,1\}$ is exactly
$$
\binom{\ell}{k}\binom{\ell-k}{i}=\frac{\ell!}{k!i!(\ell-k-i)!},
$$
where $\ell=k+j+2i$. For each such $\ell$-tuple, we have
$$
 \delta_x(z_1)\delta_{z_1}(z_2)\cdot \ldots \cdot \delta_{z_{\ell-1}}(y)=2^k\cdot (-1)^{j}=(-1)^{\ell+k}2^k,
$$
where the last equality follows since $\ell-k-j$ is even, so $(-1)^j=(-1)^{\ell-k}=(-1)^{\ell+k}$. Therefore, for $x-y=j\geq 0$
$$
\sum_{z_1,\ldots,z_{\ell-1} \in VG} \delta_x(z_1)\delta_{z_1}(z_2)\cdot \ldots \cdot \delta_{z_{\ell-1}}(y)= \underset{\ell-k-j \text{  even}}{\sum_{i,k\geq 0\,\, k+j+2i=\ell}}(-1)^{\ell +k} \frac{\ell!}{k! i! (i+j)! }2^k.
$$
Finally, we get that
\begin{align*}
H_{\mathbb{Z}}(x,y;t) &= \sum_{\ell=0}^\infty t^\ell \underset{\ell-k-j \text{  even}}{\sum_{i,k\geq 0\,\, k+j+2i=\ell}} \frac{1}{k! i! (j+i)! }(-2)^k
\\&= \sum_{k=0}^{\infty}\frac{(-2t)^k}{k!} \cdot \sum_{i=0}^{\infty}\frac{ t^{j+2i}}{i! (j+i)!}= e^{-2t} I_{j}(2t),
\end{align*}
where $I_j(2t)$ is the $I$-Bessel function, see \cite{GR07}, formula 8.445 with $\nu=j$.

In the case $x-y=-j$, we reverse the roles of $1$ and $-1$ in the above combinatorial argument, we get
that $H_{\mathbb{Z}}(x,y;t) = e^{-2t} I_{j}(2t)$.
In summary, we have shown that from \eqref{eq. HK from dirac} one gets that
the heat kernel on $\mathbb{Z}$ is $H_{\mathbb{Z}}(x,y;t) = e^{-2t}I_{|x-y|}(2t)$, which was previously
established in Bednarchak's thesis or essentially already in \cite[p. 60]{Fe71}, see also \cite{CY97, Be03, KN06}.
\end{example}

\begin{example} \label{ex. dirac seed tree}
For $q \geq 2$, let
$G=T_{q+1}$ be a $q+1$-regular tree with vertex weights $\theta\equiv 1$.
Each vertex is connected to $q+1$ vertices with edges, and
we assume the edge weights are all equal to $1$.  Then $\delta_x(z)=(q+1)$ if $z=x$,  $\delta_x(z)=-1$ for the $q+1$ vertices $z$ adjacent
to $x$ and $\delta_x(z)=0$ otherwise.  We note that the case $q=1$ is considered in the
previous example.   Let us use \eqref{eq. HK from dirac} to
compute the heat kernel on $T_{q+1}$.

The product $ \delta_x(z_1)\delta_{z_1}(z_2)\cdot \ldots \cdot \delta_{z_{\ell-1}}(y)$ is non-zero if and only
if $x=z_0,z_1,\ldots, z_{\ell-1}, y=z_\ell $ are such that each pair of neighbouring entries are either
equal or adjacent. Let $x,y\in T_{q+1}$ be such that their distance is $r\geq 0$. If $\ell\leq r$, the product is obviously zero,
so we may assume that $\ell \geq r$.

For any integer $j$ with $0\leq j\leq \ell-r$, let us assume that exactly $j$ of the $\ell$ values $\delta_{z_i}(z_{i+1})$ are equal to $(q+1)$.
Those values can be chosen in $\displaystyle \binom{\ell}{j}$ ways.  For such selection of $j$ points $z_i=z_{i+1}$, the sequence
$x=z_0,z_1,\ldots, z_{\ell-1}, y=z_\ell $ becomes a walk of length $\ell-j$ from $x$ to $y$.  Let us denote the
number of such walks by $b_{\ell-j}(r)$. Note that he number of walks depends only on the distance $r$ between $x$ and $y$. Therefore,
when including the values taken by $\delta_{x}(z)$, we get that
\begin{align*}
a_\ell(r)&= \sum_{z_1,\ldots,z_{\ell-1} \in VG} \delta_x(z_1)\delta_{z_1}(z_2)\cdot \ldots \cdot \delta_{z_{\ell-1}}(x) = \sum_{j=0}^{\ell-r}\binom{\ell}{j} (q+1)^j (-1)^{\ell-j}b_{\ell-j}(r)\\
&= \sum_{j=r}^{\ell}\binom{\ell}{\ell-j} (q+1)^{\ell-j} (-1)^{j}b_{j}(r)= (q+1)^{\ell}\sum_{j=0}^{\ell}\binom{\ell}{j} (q+1)^{-j} (-1)^{j}b_{j}(r).
\end{align*}
In the last term, we have
adopted the convention that the number of walks of length $j<r$ between two points at distance $r$ to be equal to zero. Therefore,
$$
(-1)^{\ell}a_\ell(r)=\sum_{j=0}^{\ell}\binom{\ell}{j} (-(q+1))^{\ell-j} b_{j}(r),
$$
so then we have that
$$
H_{T_{q+1}}(x,y;t)=e^{-(q+1)t}\sum_{k=0}^{\infty}b_k(r)\frac{t^k}{k!}.
$$
Let us further evaluate this expression.

The ordinary generating function for the number of walks $b_k(r)$ of length $k$ on the $(q+1)$-regular tree, between two points at a distance $r$ is given by
\begin{equation}\label{eq:tree_o_generating}
f_{q+1}(t)=\frac{2q}{q-1+(q+1)\sqrt{1-4qt^2}}\left(\frac{1-\sqrt{1-4t^2}}{2qt}\right)^r,
\end{equation}
see \cite{McK83} and \cite{RZ09};
For  $r\geq 0$, the exponential generating function
$$
g_{q+1}(t):=e^{(q+1)t}H_{T_{q+1}}(x,y;t)
$$
of the sequence
$\{b_k(r)\}_{k=0}^{\infty}$ can be expressed in terms of the ordinary generating function in at least
two different ways.  In one approach, we start with the identity
$$
t^{-1}f_{q+1}(t^{-1})=(\mathcal{L} g_{q+1})(t),
$$
which is valid for $|t|>2\sqrt{q}$ and where $\mathcal{L}$ denotes the Laplace transform.
With elementary algebraic manipulations, one obtains the identity that
$$
(\mathcal{L} g_{q+1})(t)=\sum_{j=0}^{\infty}q^{-(r+2j)/2}\left(\frac{t-\sqrt{t^2-4q}}{2\sqrt{q}}\right)^{2j+r}\left(\frac{t-\sqrt{t^2-4q}}{2q}\right),
$$
which is valid for $|t|>2\sqrt{q}$. The identity
$$
\left(\frac{t-\sqrt{t^2-4q}}{2q}\right)= \frac{1}{\sqrt{t^2-4q}}\left(1-\left(\frac{t-\sqrt{t^2-4q}}{2\sqrt{q}}\right)^2\right)
$$
yields that
$$
(\mathcal{L} g_{q+1})(t)=\frac{1}{\sqrt{t^2-4q}}\sum_{j=0}^{\infty}q^{-(r+2j)/2}\left(\frac{t-\sqrt{t^2-4q}}{2\sqrt{q}}\right)^{2j+r}
\left(1 - q^{-1}\left(\frac{t-\sqrt{t^2-4q}}{2\sqrt{q}}\right)^{2}\right),
$$
which is valid for $|t|>2\sqrt{q}$. Therefore,
$$
(\mathcal{L} g_{q+1})(t)=\frac{q^{-r/2}}{\sqrt{t^2-4q}}\left(\frac{t-\sqrt{t^2-4q}}{2\sqrt{q}}\right)^r- (q-1)\sum_{j=1}^{\infty}\frac{q^{-(r+2j)/2}}{\sqrt{t^2-4q}}\left(\frac{t-\sqrt{t^2-4q}}{2\sqrt{q}}\right)^{2j+r}.
$$
From \cite{GR07}, section 17.13, formula 109 with $a=2\sqrt{q}$ and $\nu=r+2j$ for $j=0,1,\ldots$ and Re$(t)>2\sqrt{q}$, we have that
$$
\mathcal{L}(I_{r+2j}(2\sqrt{q}x))(t)=\frac{1}{\sqrt{t^2-4q}}\left(\frac{t-\sqrt{t^2-4q}}{2\sqrt{q}}\right)^{2j+r}.
$$
From this, we conclude that
\begin{equation}\label{eq:heat_kernel_tree}
e^{(q+1)t}H_{T_{q+1}}(x,y;t)=q^{-r/2}I_r(2\sqrt{q}t) - (q-1)\sum_{j=1}^{\infty}q^{-(r+2j)/2}I_{2j+r}(2\sqrt{q}t).
\end{equation}
The formula in \eqref{eq:heat_kernel_tree} is precisely
the expression for the heat kernel on a $T_{q+1}$ appearing in \cite{CMS00, CJK15}.

As stated, the exponential generating function $g_{q+1}(t)$ and the ordinary generating function $f_{q+1}(t)$ are
related by the Laplace transform, meaning that
$$
g_{q+1}(t)= \frac{1}{2\pi} \int\limits_{-\pi}^{\pi}f_{q+1}(te^{i\theta}) \exp(e^{i\theta}) d\theta,
$$
valid for $|t|< 1/(2\sqrt{q})$.  Using \eqref{eq:tree_o_generating}, one can employ elementary manipulations of
the integral and derive the integral expression for the heat kernel $H_{T_{q+1}}(x,y;t)$ established in \cite{CY99}.
We will omit the details of these computations.
\end{example}

\section{Using a distance metric to construct a parametrix}\label{sec. further ex}

In this section we will describe how to define further examples of a parametrix for the heat kernel using a distance function on
$G$.   The examples we develop come from different distance functions $d: VG \times VG \rightarrow \mathbb{R}_{\geq 0}$,
so we denote the parametrix by $H_{d}$.  In section \ref{sec: metric} we define
some metrics that can be used in the construction of $H_{d}$, and in section \ref{sec: ex of param} we give explicit examples
of an associated parametrix.
\vskip .06in

Throughout this section, we assume that an infinite graph $G$ satisfies assumptions (G1), (G2) and the following strengthening of the assumption (G3):
\begin{itemize}
\item [(G3')] {\bf Uniform boundedness of the combinatorial vertex degree.}
There exists a positive integer $N$ such that for all $x\in VG$ the number of $y\in VG$ such that $w_{xy}>0$ is bounded by $N$.
\end{itemize}

\noindent
Assumption (G3') is not unusual in the work that is related to properties of operators on infinite weighted graphs.
For example, it suffices to deduce natural upper and lower bounds for the heat kernel in terms of the combinatorial distance in \cite{MS00} and \cite{Sc02}. Moreover,  uniform boundedness of the combinatorial degree yields essential self-adjointness of the Laplacian, as well as of other operators on $G$, such as a  Schr\" odinger operator.
For more details, we refer to  \cite{CdVT-HT11} or \cite{Mi11}.

\subsection{Distance metric on a weighted graph} \label{sec: metric}

Let $d_G(x,y)$ denote an arbitrary distance metric on $G$ which is uniformly bounded from below.
More specifically, we assume there is a positive constant $\delta$ such that
\begin{equation}\label{eq. metric bound from below}
\text{\rm for all $x, y \in VG$, \,\,} x\neq y \,\, \text{  \rm we have that} \,\,\,\,\,  d_G(x,y)\geq \delta>0.
\end{equation}
Such a metric always exists on a connected graph, and below in this section, as well as
in section \ref{sec. edge weighted dist},
we will provide a few examples.

\subsubsection{Combinatorial graph distance}\label{sec:graph_metric}

The graph $G$ is connected, meaning for any two distinct points $x,y\in VG$ there exists a path connecting the $x$ to $y$.
To be clear, by a path we mean
a sequence of points $p(x,y)=\{x=x_0, x_1,\ldots, x_n= y\}$ such that $w_{x_jx_{j+1}}>0$ for all $j\geq 0$.
The path $p(x,y)=\{x=x_0, x_1,\ldots, x_n= y\}$ is of length $n\in \mathbb{N}$ . If $d_G(x,y)$ denotes the minimal length
of all paths that connect $x$ and $y$, it is then straightforward to conclude that $d_G$ is a distance metric on $G$.
Indeed, $d_G$ is well defined because $G$ is connected.  The function $d_{G}$ is symmetric, which follows from the fact that $G$ is undirected.
Also, the triangle inequality is immediate, while the equivalence $d_G(x,y)=0$ if and only if $x=y$ follows from the fact that $G$ has no loops.

This distance metric will be called the \emph{combinatorial graph distance}. It is obviously
satisfies \eqref{eq. metric bound from below} with $\delta=1$.

 The combinatorial graph distance is \emph{intrinsic} metric on $G$ (as defined in \cite{FLW14})  if and only if the condition (G3') is fulfilled, see \cite{HKMW13} and \cite{KLSW15}.

\subsubsection{Metric adapted to the Laplacian}

\vskip .06in
There are another choices for a metric on $G$ which in addition to satisfying \eqref{eq. metric bound from below} is closely related to stochastic properties of a graph.
For example, \emph{the normalized combinatorial graph distance} metric is defined by
$$
\rho_G:=(A(G,w,\theta))^{-1/2} d_G;
$$
see \eqref{eq:A_definition} and section \ref{sec:graph_metric}.  It is immediate that
$\rho_{G}$ is bounded from
below by $\delta= (M/\eta)^{-1/2}$.  The metric $\rho_{G}$ is \emph{adapted to the Laplacian} $\Delta_G$,
in view of the Definition (1.3) on p. 117 of \cite{Fo13}.  This means that for all $x\in VG$,
the metric $\rho_G$ satisfies the inequality
\begin{equation}\label{eq. intr ineq}
\frac{1}{\theta(x)}\sum_{y\sim x} \rho_G^2 (x,y)w_{xy}\leq 1
\end{equation}
and there exists a constant $c_{\rho_G}$ such that $\rho_G(x,y)\leq c_{\rho_G}$ whenever $x\sim y$. The inequality \eqref{eq. intr ineq}
is analogous to the geodesic distance $\rho$ on a Riemannian manifold which satisfies $|\nabla \rho(x, \cdot)|\leq 1$.
Any metric on $G$ adapted to the Laplacian is intrinsic.

Intrinsic metric for random walks under degenerate conductances, is defined, for $x, y \in VG$ by
$$
d_\theta^w(x,y):=\inf_{\gamma\in \Gamma_{xy}} \left\{\sum_{i=0}^{\ell_\gamma-1}  \min\left\{1,\frac{ \min\left\{\theta(z_i), \theta(z_{i+1})\right\}}{w_{z_iz_{i+1}}}\right\}^{1/2} \right\},
$$
where $ \Gamma_{xy}$ is the set of all nearest neighbor paths $\gamma=(z_0,\ldots, z_{\ell_{\gamma}})$ connecting $x$ and $y$, see \cite{ADS19} where it is also proved that $d_\theta^w$ satisfies \eqref{eq. metric bound from below}.


A further example of a metric on $G$ which is adopted to the Laplacian $\Delta_G$ and uniformly bounded from below is defined as follows.  For all $x, y \in VG$, let
$$
\tilde{d}_G(x,y):=\inf \left\{\sum_{e\in p(x,y)}  \min\{1, u(e)\} : p(x,y)\, \text{is a path joining }\, x\, \text{and} \, y\right\}
$$
where $e$ is an edge in the path and, if $x_i$ and $x_{i+1}$ are the endpoints of the edge $e$, $u(e)$ is defined as $$u(e)=\left(\min\left\{\frac{\theta(x_i)}{\mu(x_i)}, \frac{\theta(x_{i+1})}{\mu(x_{i+1})}\right\}\right)^{1/2}.$$

\subsection{Dilated Gaussian as a parametrix} \label{sec: ex of param}

Let $d: VG\times VG \to [0,\infty)$ denote any distance metric on $G$ satisfying the assumption \eqref{eq. metric bound from below}.

\begin{prop} \label{prop. Gaussian parametrix}
With the notation as above, let $H_d:VG\times VG \times (0,\infty) \to [0,\infty) $ be defined as
\begin{equation}\label{eq. defn param exp}
H_d(x,y;t):=\frac{1}{\sqrt{\theta(x)\theta(y)}} \exp(-(\theta(x)\theta(y)d^2(x,y))/t)
\end{equation}
and
$$
H_d(x,y;0)=\lim_{t\downarrow 0} H_d(x,y;t).
$$
Then $H_d$ is a parametrix for the heat operator on $G$ of order $k=0$.
\end{prop}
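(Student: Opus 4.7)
The plan is to verify in order the four conditions of Definition~\ref{def. parametrix} for $H_d$.

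For condition~(1), the metric axiom $d(x,x)=0$ gives $H_d(x,x;t)=1/\theta(x)$ for every $t>0$, while for $x\ne y$ the lower bound \eqref{eq. metric bound from below} together with (G2) yields $\theta(x)\theta(y)d(x,y)^2\ge \eta^2\delta^2>0$, so the exponent tends to $-\infty$ as $t\downarrow 0$ and $H_d(x,y;t)\to 0$.

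I would next compute explicitly the two pieces of $L_{G,x}H_d=\partial_t H_d+\Delta_{G,x}H_d$. A direct differentiation gives
$$
\partial_t H_d(x,y;t)=\frac{\sqrt{\theta(x)\theta(y)}\,d(x,y)^2}{t^2}\,e^{-\theta(x)\theta(y)d(x,y)^2/t},
$$
while by (G3') the expression $\Delta_{G,x}H_d(x,y;t)$ is a finite sum over the at most $N$ neighbors of $x$. For condition~(4), I would introduce $u:=\theta(x)\theta(y)d(x,y)^2/t$ and rewrite $\partial_t H_d = u^2e^{-u}/(\theta(x)^{3/2}\theta(y)^{3/2}d(x,y)^2)$; since $\sup_{u\ge 0}u^2e^{-u}=4e^{-2}$, and $\theta\ge\eta$, $d\ge\delta$ for $y\ne x$ (with $\partial_t H_d(x,x;t)\equiv 0$), this bounds $|\partial_t H_d|$ uniformly in $(x,y,t)$. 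For $\Delta_{G,x}H_d$, the bound $|H_d|\le 1/\eta$ together with (G1) gives $|\Delta_{G,x}H_d|\le 2M/\eta$. Summing yields condition~(4) with $k=0$. Condition~(2) then follows: the formulas are manifestly continuous on $(0,\infty)$, and the continuous extension to $t=0$ is read off termwise. For instance, $\partial_t H_d\to 0$ as $t\downarrow 0$ for every $(x,y)$, and $\Delta_{G,x}H_d(x,y;t)$ tends to $\mu(x)/\theta(x)^2$ when $x=y$, to $-w_{xy}/(\theta(x)\theta(y))$ when $y\sim x$ with $y\ne x$, and to $0$ otherwise.

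The main obstacle is condition~(3), the $L^1(\theta)$ summability of $\partial_t H_d(x,\cdot;t)$ and $\Delta_{G,x}H_d(x,\cdot;t)$ in the second variable. My plan is to split the exponent $\theta(x)\theta(y)d^2/t$ into two equal halves and absorb the polynomial prefactors $\sqrt{\theta(y)}$ (for $H_d$) and $\theta(y)^{3/2}d^2$ (for $\partial_t H_d$) into the first half via the elementary bounds $u^{p}e^{-u/2}\le C_p$. This reduces both summability statements to a tail estimate of the form $\sum_{y\ne x}e^{-c\,\theta(x)\eta\,d(x,y)^2/t}<\infty$. Since (G3') forces the combinatorial ball of radius $r$ around $x$ to contain at most $N^r$ vertices, for any metric $d$ that controls the combinatorial distance from below (as do all the examples of Section~\ref{sec: metric}) the Gaussian decay dominates the exponential volume growth, and the sum converges for every $t>0$. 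The $L^1$ bound for $\Delta_{G,x}H_d$ then follows by the triangle inequality applied over the at most $N$ neighbors of $x$.
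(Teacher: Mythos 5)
Your verification of conditions (1), (2) and (4) of Definition \ref{def. parametrix} follows the paper's proof essentially verbatim: the same explicit formula for $\partial_t H_d$, the same use of (G1)--(G2) to get $|\Delta_{G,x}H_d|\le 2M/\eta$, and an elementary calculus bound for the time-derivative term (your uniform bound via $\sup_{u\ge 0}u^2e^{-u}=4e^{-2}$, $\theta\ge\eta$, $d\ge\delta$ is in fact a slightly cleaner version of the paper's estimate, which only claims a bound on $(0,t_0]$). Your plan for the $L^1(\theta)$ statements is also the paper's route: split the exponent in half, absorb the prefactors $\sqrt{\theta(y)}$ and $\theta(y)^{3/2}d^2(x,y)$ by inequalities of the type $u^pe^{-u/2}\le C_p$, reduce everything to a Gaussian tail sum, control that sum by counting vertices in $d$-balls using (G3') (this is the paper's \eqref{eq. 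L1 statement} and \eqref{eq. bound basic sum}), and finally handle $\Delta_{G,x}H_d(x,\cdot;t)$ by the triangle inequality over the at most $N$ neighbors of $x$ together with (G1).

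The one genuine gap is at the tail estimate. The proposition is asserted for \emph{every} metric satisfying only the uniform lower bound \eqref{eq. metric bound from below}, but at the decisive step you restrict to metrics ``that control the combinatorial distance from below''; this is an extra hypothesis not in the statement, so as written your argument does not prove the proposition in its claimed generality. The paper instead asserts, and uses, that (G3') together with $d\ge\delta$ already forces a fixed vertex to have at most $N^{n+1}$ vertices within $d$-distance $(n+1)\delta$, i.e.\ that $d$-balls of radius $(n+1)\delta$ lie inside combinatorial balls of radius $n+1$; that containment is exactly what feeds \eqref{eq. bound basic sum}. Your hedge does point at a real subtlety: for a completely arbitrary metric with $d\ge\delta$ off the diagonal (e.g.\ $d\equiv\delta$ on all distinct pairs) every $d$-ball of radius $2\delta$ is all of $VG$ and $H_d(x,\cdot;t)$ need not be summable, whereas all the metrics of section \ref{sec: metric} are path-type metrics satisfying $d\ge\delta\,d_G$ (with $d_G$ the combinatorial distance), for which the counting is immediate. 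But to prove the statement as given you must either justify the ball-counting claim in the generality in which the paper invokes it, or say explicitly that you are strengthening the hypothesis on $d$; at present that step is simply assumed rather than proved.
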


\begin{proof}
By assumption, the distance $d$ is bounded from below.  From this it is immediate
that $H_d(x,y;t)$ is continuous function in $t$ and that $H_d(x,y;0)$  satisfies
\eqref{eq:dirac property of heat kernel}. Trivially, $H_d(x,y;t)$ is smooth for $t\in(0,\infty)$ and all $x,y\in
VG$. By definition, $H_d$ is symmetric in the two spatial variables.  So, in order to prove integrability in each space
variable, it suffices to show that for all $y\in VG$ and $t>0$, we have that
\begin{equation*}
\sum_{x\in VG} \exp\left(-\frac{\theta(x)\theta(y)d^2(x,y)}{t}\right) \sqrt{\frac{\theta(x)}{\theta(y)}} 
\end{equation*}
First, we note that for all positive numbers $a$ and $u$, the inequality
$$
u\exp(-u^2a^2)\leq \frac{1}{a\sqrt{2e}}
$$
follows from elementary calculus.  When taking $u=\sqrt{ \theta(x)\theta(y)}d(x,y)$ for $x \neq y$ and $a=\frac{1}{\sqrt{2t}}$,
we get that
$$
 \exp\left(-\frac{\theta(x)\theta(y)d^2(x,y)}{2t}\right)\sqrt{\theta(x)}\leq \sqrt{\frac{t}{e}}\frac{1}{\sqrt{\theta(y)} d(x,y)}\leq \frac{1}{ \delta}\sqrt{\frac{t}{\eta e}},
$$
where we used (G2) and \eqref{eq. metric bound from below} to deduce the last inequality. Therefore,
$$
\frac{1}{\sqrt{\theta(y)}}\sum_{x\in VG} \exp\left(-\frac{\theta(x)\theta(y)d^2(x,y)}{t}\right) \sqrt{\theta(x)}\leq 1+
\sqrt{\frac{t}{ e}}\frac{1}{\delta\eta} \sum_{x\in VG\setminus\{y\}} \exp\left(-\frac{\theta(x)\theta(y)d^2(x,y)}{2t}\right).
$$
It is left to prove that the series on the right-hand side of the above equation converges. For this, note that
when combining (G3')  with \eqref{eq. metric bound from below} we conclude  that for fixed $y$ on $G$ there are at most $N^{n+1}$ vertices
$x\neq y$ with distance $\leq (n+1)\delta$ from $y$.  Therefore,
\begin{align}\label{eq. bound basic sum}
\sum_{x\in VG\setminus\{y\}} \exp\left(-\frac{\theta(x)\theta(y)d^2(x,y)}{2t}\right) &=  \sum_{n=1}^{\infty} \sum_{n\delta <d(x,y) \leq (n+1)\delta}  \exp\left(-\frac{\theta(x)\theta(y)d^2(x,y)}{2t}\right) \notag\\ &\leq
\sum_{n=1}^{\infty} N^{n+1}\exp\left(-\frac{(\eta n\delta)^2}{2t}\right)=C(N,\eta,\delta,t) <\infty.
\end{align}
This proves that
\begin{equation}\label{eq. L1 statement}
\frac{1}{\sqrt{\theta(y)}}\sum_{x\in VG} \exp\left(-\frac{\theta(x)\theta(y)d^2(x,y)}{t}\right) \sqrt{\theta(x)}\leq 1+\sqrt{\frac{t}{ e}}\frac{C(N,\eta,\delta,t)}{\delta\eta}:=C_1(N,\eta,\delta,t)
\end{equation}
and completes the proof that $H_d$ is integrable in both space variables.

Next, we want to show the second condition in Definition \ref{def. parametrix} of the parametrix holds for $H_{d}$. By definition, for $t>0$
we have that
\begin{align}\notag
L_{G,x}H_d(x,y;t) &= \frac{1}{\theta(x)}\sum_{z\sim x} \left( \frac{1}{\sqrt{\theta(x)\theta(y)}}
\exp\left(-\frac{\theta(x)\theta(y)d^2(x,y)}{t}\right) \right. \\& \hskip 1.0in
\left.- \frac{1}{\sqrt{\theta(z)\theta(y)}} \exp\left(-\frac{\theta(z)\theta(y)d^2(z,y)}{t}\right)\right)w_{xz}
\notag\\&+\frac{\theta(x)\theta(y)d^2(x,y)}{t^2 \sqrt{\theta(x)\theta(y)}}\exp\left(-\frac{\theta(x)\theta(y)d^2(x,y)}{t}\right).\label{eq. defn LGx}
\end{align}
The sum on the right-hand side \eqref{eq. defn LGx} is finite by (G3'), and each term extends to a continuous function in $t\in[0,\infty)$.
Therefore,  the second condition in Definition \ref{def. parametrix} is also fulfilled.

Going further, conditions (G1) and (G2) yield that
\begin{align*}
\left| \Delta_{G,x}H_d(x,y;t) \right|&\leq
\frac{1}{\theta(x)\eta} \sum_{z\sim x} \left( \exp\left(-\frac{\theta(x)\theta(y)d^2(x,y)}{t}\right) +  \exp\left(-\frac{\theta(z)\theta(y)d^2(z,y)}{t}\right)\right)w_{xz}\\ &\leq \frac{2\mu(x)}{\theta(x)\eta}\leq \frac{2M}{\eta}.
\end{align*}
For all $u, a > 0$, we have that
$$
u^2a^{-2}\exp(-u^2/a) \leq \frac{1}{ae},
$$
and for $u > 0$,
$$
\lim_{a\downarrow 0} u^2a^{-2}\exp(-u^2/a)=0.
$$
Upon taking $u= \sqrt{\theta(x)\theta(y)}d(x,y)$ and $a=t$, we get for $x\neq y$ and $t\in(0,t_0)$ that
\begin{align*}
\frac{\theta(x)\theta(y)d^2(x,y)}{t^2 \sqrt{\theta(x)\theta(y)}}&\exp\left(-\frac{\theta(x)\theta(y)d^2(x,y)}{t}\right) \\& \leq \frac{1}{\eta}\frac{\theta(x)\theta(y)d^2(x,y)}{t^2}\exp\left(-\frac{(\sqrt{\theta(x)\theta(y)}d(x,y))^2}{t}\right)\leq \frac{C}{\eta},
\end{align*}
where $C$ is a constant depending only on $t_0$.
When $x=y$, the third line in \eqref{eq. defn LGx} equals zero. With all this,  we have proved that $L_{G,x}H_d(x,y;t)$ is bounded for
$t\in(0,t_0)$ by a constant depending only on $t_0$.

Finally, in order to prove that $H_d$ is the parametrix of order zero, it is left to prove that
$\Delta_{G,x}H_d(x,\cdot;t)$ and  $\frac{\partial}{\partial t}  H_d(x,\cdot;t)$ are in  $L^1(\theta)$ for  $x\in VG$, $t>0$. We have
\begin{align*}\label{eq:LH_order_zero}
\sum_{y\in VG} |\Delta_{G,x}H_d(x,y;t)|\theta(y)&\leq \frac{1}{\theta(x)}\sum_{z\sim x}\frac{w_{xz}}{\sqrt{\theta(x)}} \sum_{y\in VG}\exp\left(-\frac{\theta(x)
\theta(y)d^2(x,y)}{t}\right)\sqrt{\theta(y)} \\&+ \frac{1}{\theta(x)}\sum_{z\sim x}\frac{w_{xz}}{\sqrt{\theta(z)}} \sum_{y\in VG}\exp\left(-\frac{\theta(z)\theta(y)d^2(z,y)^2}{t}\right)\sqrt{\theta(y)}.\notag
\end{align*}
In view of the symmetry of variables, from \eqref{eq. L1 statement} and (G1) we deduce that
$$
\sum_{y\in VG} |\Delta_{G,x}H_d(x,y;t)|\theta(y)\leq \frac{2C_1(N,\eta,\delta,t)}{\theta(x)}\sum_{z\sim x}w_{xz}\leq 2 M C_1(N,\eta,\delta,t).
$$
This proves that $\Delta_{G,x}H_d(x,\cdot;t)\in L^1(\theta)$.

To complete the proof of the proposition it is left to show that
$$
\sum_{y\in VG}\frac{\theta(x)\theta(y)d^2(x,y)}{t^2 \sqrt{\theta(x)\theta(y)}}\exp\left(-\frac{\theta(x)\theta(y)d^2(x,y)}{t}\right)\theta(y)
<\infty.
$$
The proof is completely analogous to the proof of \eqref{eq. L1 statement}, so we will omit further details.

\end{proof}

\section{Concluding remarks}

We will close the article with the following observations

\subsection{Choice of distance metric}
In Proposition \ref{prop. Gaussian parametrix} above, the choice of a distance metric was not specified.  Indeed, any distance metric $d$
which is uniformly bounded from below could be used in the definition \eqref{eq. defn param exp} for the parametrix. In other words, we
can construct the same heat kernel when using different distance metric on the graph.

This is quite different from the Riemannian manifolds
situation where the construction of a parametrix already requires considerable local information associated to the Laplacian;
see, for example, section VI.4 of \cite{Ch84}.  In a sense, the very general methods by which one can define a parametrix
for an infinite graph belongs to a class of geometric phenomena on the graph which are somewhat unexpected if one were
to view a graph as the discretization of a manifold. For an extensive discussion of such phenomena we refer to \cite{KLW21}.

\subsection{Edge-weighted graph distance}\label{sec. edge weighted dist}

A natural condition, which arises in many studies related to Gaussian-type bounds for the heat kernel on infinite graphs (e.g. \cite{HLLY19}, \cite{Wu21}) is

\begin{itemize}
\item [(E1)] {\bf Boundedness from below of the edge weight.}
There exists a positive number $\tilde{\delta}$ such that
$$
\inf\limits_{x,y\in VG; x\neq y} w_{xy} >\tilde{\delta}.
$$
\end{itemize}


Assumption (E1) arises naturally in the context of metric graphs in which a positive real number $\ell_e$ is
associated to every edge $e=\{x,y\}$ of a graph, so then $w_{xy}$ can be taken be equal $\ell_e$.  The condition
$\inf_e \ell_e >0$ is then imposed in order to deduce that a connected metric graph is actually a length
metric space; see for example \cite{BBI01} or \cite{St06}.

With (E1) another distance on $G$ can be defined as follows.  For any vertices $x$ and $y$ and any
path $p(x,y)=\{x=x_0, x_1,\ldots,
x_n=y\}$ connecting points $x$ and $y$, the \emph{weighted length} of the path $\ell_w(p(x,y))$
is defined by
$$
\ell_w(p(x,y)) :=\sum_{j\geq 0}w_{x_jx_{j+1}}.
$$
Let
$$
\tilde{d}_G(x,y):= \inf_{p(x,y)} \ell_w(p(x,y))
$$
where the infimum is taken over all paths connecting $x$ and $y$.
Since $G$ is connected and $w_{xy}$ is uniformly bounded from below,
the function $\tilde{d}_G: VG \times VG  \to [0,\infty]$ is a distance on $VG$ which can be called the
\emph{edge-weighted distance}. For graphs satisfying (G1), (G2), (G3') and (E1) it is immediate that
the dilated Gaussian $H_{\tilde{d}_G}$ defined by  \eqref{eq. defn param exp} is a parametrix.

\subsection{Volume doubling}

The condition (G3') was used in the proof of Proposition \ref{prop. Gaussian parametrix} in order to derive that the right-hand side of \eqref{eq. defn LGx} extends to a continuous function in $t$ and in order to derive the bound \eqref{eq. bound basic sum}.

The finiteness of the sum on the right-hand side of \eqref{eq. defn LGx} follows from local finiteness of $G$, meaning that it holds true even if the vertex degree is not uniformly bounded. The bound \eqref{eq. bound basic sum} can be proved if, instead of (G3') one poses the following condition on the volume growth.

\begin{itemize}
\item [(V1)] {\bf Volume doubling for distance $d$.}
There exists a positive constant $C_d$ such that for all $x\in VG$ and all $r>0$
$$
V_d(x,2r)\leq C_d V_d(x,r).
$$
\end{itemize}
Here, the volume $V_d(x,r)$ of a ball centered at $x$ of radius $r$, in metric $d$ on a graph $G$ is defined as
$$
 V_d(x,r)=\sum_{y\in VG\,:\, d(x,y)<r } \theta(y).
$$
If metric $d$ satisfies \eqref{eq. metric bound from below} and $G$ has volume doubling property, it is trivial to deduce that
$$
V_d(x,2^n \delta) \leq C_d^n V(x,\delta) =C_d^n \theta(x).
$$
Then
\begin{align*}
\sum_{x\in VG\setminus\{y\}} \exp\left(-\frac{\theta(x)\theta(y)d^2(x,y)}{2t}\right) &=  \sum_{n=1}^{\infty} \sum_{2^n\delta <d(y,x) \leq 2^{n+1}\delta}  \frac{\theta(x)}{\eta}\exp\left(-\frac{\theta(x)\theta(y)d^2(y,x)}{2t}\right) \notag\\ &\leq
\frac {1}{\eta}\sum_{n=1}^{\infty} C_d^{n+1}\theta(y)\exp\left(-\frac{\eta \theta(y) (2^n\delta)^2}{2t}\right)<\infty.
\end{align*}
This proves that \eqref{eq. bound basic sum} holds true when (G3') is replaced by (V1) and shows that Proposition \ref{prop. Gaussian parametrix} holds true for locally finite graphs $G$  satisfying conditions (G1), (G2) and with metric $d$ such that \eqref{eq. metric bound from below} and (V1) hold true.

\vspace{5mm}
\noindent
Jay Jorgenson \\
 Department of Mathematics \\
 The City College of New York \\
 Convent Avenue at 138th Street \\
 New York, NY 10031 U.S.A. \\
 e-mail: jjorgenson@mindspring.com

\vspace{5mm}
\noindent
Anders Karlsson \\
 Section de mathématiques\\
 Université de Genève\\
 2-4 Rue du Liévre\\
 Case Postale 64, 1211\\
 Genève 4, Suisse\\
 e-mail: anders.karlsson@unige.ch \\
 and \\
 Matematiska institutionen \\
 Uppsala universitet \\
Box 256, 751 05 \\
 Uppsala, Sweden \\
 e-mail: anders.karlsson@math.uu.se

\vspace{5mm}
\noindent
Lejla Smajlovi\'{c} \\
 Department of Mathematics and Computer Science\\
 University of Sarajevo\\
 Zmaja od Bosne 35, 71 000 Sarajevo\\
 Bosnia and Herzegovina\\
 e-mail: lejlas@pmf.unsa.ba
\end{document}